\renewcommand{\p@enumii}{}
\def\@enum@{\list{\csname label\@enumctr\endcsname}%
{\usecounter{\@enumctr}\def\makelabel##1{
\normalfont\ignorespaces\emph{{##1}~}}
\setlength{\labelsep}{3pt}
\setlength{\parsep}{0pt}
\setlength{\itemsep}{0pt}
\setlength{\leftmargin}{0pt}
\setlength{\labelwidth}{0pt}
\setlength{\listparindent}{\parindent}
\setlength{\itemsep}{0pt}
\setlength{\itemindent}{0pt}
\topsep=3pt plus 1pt minus 1 pt}}
\renewcommand{\epsilon}{\ensuremath{\varepsilon}}
\renewcommand{\phi}{\ensuremath{\varphi}}
\renewcommand{\to}{\ensuremath{\longrightarrow}}
\renewcommand{\mapsto}{\ensuremath{\longmapsto}}
\newcommand{\R}{\ensuremath{\mathbb R}}
\newcommand{\Z}{\ensuremath{\mathbb Z}}
\newcommand{\St}[1][2]{\ensuremath{\mathbb S}^{#1}}
\renewcommand{\ker}[1]{\ensuremath{\operatorname{\text{Ker}}\left({#1}\right)}}
\def\@map#1#2[#3]{\mbox{$#1 \colon\thinspace #2 \to #3$}}
\def\map#1#2{\@ifnextchar [{\@map{#1}{#2}}{\@map{#1}{#2}[#2]}}
\newcommand{\ang}[1]{\ensuremath{\left\langle #1\right\rangle}}
\newcommand{\setang}[2]{\ensuremath{\ang{#1 \,\mid\, #2}}}
\newtheoremstyle{theoremm}{}{}{\itshape}{}{\scshape}{.}{ }{}
\theoremstyle{theoremm}
\newtheorem{thm}{Theorem}
\newtheorem{lem}[thm]{Lemma}
\newtheorem{prop}[thm]{Proposition}
\newtheorem{cor}[thm]{Corollary}
\newtheoremstyle{remark}{}{}{}{}{\scshape}{.}{ }{}
\theoremstyle{remark}
\newtheorem{defn}[thm]{Definition}
\newtheorem{rem}[thm]{Remark}
\newtheorem{rems}[thm]{Remarks}
\newtheoremstyle{comment}{}{}{\bfseries}{}{\bfseries}{:}{ }{}
\theoremstyle{comment}
\newcommand{\redefn}[1]{Definition~\protect\ref{defn:#1}}
\newcommand{\rethm}[1]{Theorem~\protect\ref{thm:#1}}
\newcommand{\relem}[1]{Lemma~\protect\ref{lem:#1}}
\newcommand{\reprop}[1]{Proposition~\protect\ref{prop:#1}}
\newcommand{\rerem}[1]{Remark~\protect\ref{rem:#1}}
\newcommand{\reqref}[1]{(\protect\ref{eq:#1})}
\newcommand{\tq}{\,|\,}
\newcommand{\apres}[2]{\langle #1 \tq #2\rangle}
\newcommand{\BZ}{\mathbb{Z}}
\newcommand{\ie}{\emph{i.e.}}
\begin{document}

\title{Subgroup separability of surface braid groups and virtual braid
groups}


\author{KISNNEY ALMEIDA\\
Universidade Estadual de Feira de Santana,\\
Departamento de Ciências Exatas,\\
Av. Transnordestina S/N, CEP 44036-900 - Feira de Santana - BA - Brazil.\\
e-mail:~\texttt{kisnney@gmail.com}\vspace*{4mm}\\
IGOR~LIMA\\
Universidade de Bras\'ilia,\\
Departamento de Matem\'atica, \\
Bras\'ilia, DF, 70910-900, Brasil.\\
e-mail:~\texttt{igor.matematico@gmail.com}\vspace*{4mm}\\
OSCAR~OCAMPO~\\
Universidade Federal da Bahia,\\
Departamento de Matem\'atica - IME,\\
Av. Adhemar de Barros~S/N~CEP:~40170-110 - Salvador - BA - Brazil.\\
e-mail:~\texttt{oscaro@ufba.br}
}


\maketitle

\begin{abstract}
In this paper, we study subgroup separability (also known as LERF) and related properties for surface braid groups and virtual (singular) braid groups
 \end{abstract}

\section{Introduction}\label{intro}


The braid groups of the $2$-disc, or Artin braid groups, were introduced by Artin in 1925 and further studied in 1947~\cite{A1,A2}. Surface braid groups were initially studied by Zariski~\cite{Z}, and were later generalised by Fox and Neuwirth to braid groups of arbitrary topological spaces using configuration spaces as follows~\cite{FoN}. Let $S$ be a compact, connected surface, and let $n\in \mathbb N$. The \textit{$n$th ordered configuration space of $S$}, denoted by $F_{n}(S)$, is defined by:
\begin{equation*}
F_n(S)=\left\{(x_{1},\ldots,x_{n})\in S^{n} \mid x_{i}\neq x_{j}\,\, \text{if}\,\, i\neq j;\,i,j=1,\ldots,n\right\}.
\end{equation*}
The \textit{$n$-string pure braid group $P_n(S)$ of $S$} is defined by $P_n(S)=\pi_1(F_n(S))$. The symmetric group $S_{n}$ on $n$ letters acts freely on $F_{n}(S)$ by permuting coordinates, and the \textit{$n$-string braid group $B_n(S)$ of $S$} is defined by $B_n(S)=\pi_1(F_n(S)/S_{n})$. This gives rise to the following short exact sequence:
\begin{equation}\label{eq:ses}
1 \to P_{n}(S) \to B_{n}(S) \stackrel{\sigma}{\longrightarrow} S_{n} \to 1.
\end{equation}
The map $\map{\sigma}{B_{n}(S)}[S_{n}]$ is the standard homomorphism that associates a permutation to each element of $S_{n}$.


\begin{rems}\label{rem:pi1}
    \begin{enumerate}
        \item     Follows from the definition that $F_1(S)=S$ for any surface $S$, the groups $P_1(S)$ and $B_1(S)$ are isomorphic to $\pi_1(S)$.
        For this reason, braid groups over the surface $S$ may be seen as generalizations of the fundamental group of $S$.

        \item Let $S$ be a surface with boundary $\partial S$ and let $S'=S\setminus \partial S$.
We note that the $n^{th}$ (pure) braid groups  of $S$ and $S'$ are isomorphic, see
\cite[Remarks~8(d)]{GPi}.

    \end{enumerate}

\end{rems}

For more information on general aspects of surface braid groups we recommend the survey \cite{GPi}, in particular its Section~2 where equivalent definitions of these groups are given, showing different viewpoints of these groups.

A group $G$ is called \textit{subgroup separable} or \textit{locally extended
residually finite (LERF)} if each f.g. subgroup $H$ of $G$ is the intersection of finite index subgroups of $G$. A group $G$ is called \textit{residually finite} if the trivial subgroup is the intersection of the finite index
subgroups of $G$. The concept of subgroup separability clearly implies residual finiteness and it was first studied by Hall \cite{H}. Mal'cev proved residual finiteness implies solvable word problem \cite{Mal} and the same idea may be adapted to prove LERF implies solubility of the generalized word problem, as explained in \cite{FW}.




Subgroup separability is also connected to the profinite topology of $G$, \ie ,  the topo\-logy in which a base for the open sets is the set of all cosets of normal subgroups of finite index in $G$. A group $G$ is subgroup separable if and only if every finitely generated subgroup of $G$ is closed in the profinite topology \cite{H2}.

Usually it is hard to establish general results on subgroup separability, but we know that finite groups, abelian groups \cite{Mal}, surface groups \cite{S2}, free groups \cite{H} and free products of LERF groups are LERF \cite{Bu}, for example.

The case of classical braid groups has already been established \cite{DM}, so it is natural to investigate the available generalizations. In a previous work, Almeida and Lima have established a criterion for the subgroup separability of Artin groups \cite{AL}, which inspired us to investigate the case of surface braid groups and virtual braid groups, as we do in this paper. We also discuss some properties that are related to subgroup separability for those groups.


The paper is organized as follows: In Section \ref{sec:prelim}, we explain some facts on subgroup separability we will need. In Section \ref{sec:sbg} we deal with  surface braid groups: In the first three subsections we present our main tools - the analysis of which surface braid groups contain $F_2\times F_2$ as a subgroup, the Fadell-Neuwirth short exact sequence and property (LR), respectively - and in the fourth subsection we prove our main results on subgroup separability of surface braid groups and other consequences. Finally in Section \ref{sec:vbg} we study the case of virtual braid groups.

\subsection*{Acknowledgments}
The first author was partially supported by FINAPESQ-UEFS, FAPDF, Brazil. The second author was partially supported by DPI/UnB, FAPDF, Brazil. The third author was partially supported by FAPDF, Brazil and by National Council for Scientific and Technological Development (CNPq) through a \textit{Bolsa de Produtividade} 305422/2022-7.

\section{Subgroup separability}\label{sec:prelim}

In this section we establish some results on subgroup separability that we will need. First we present its behaviour on subgroups.

\begin{thm}[{\cite[Lemma~1.1]{S}}]\label{thm:lerf1}
Let $H$ be a subgroup of $G$. If $H$ is not LERF then $G$ is not LERF either. The converse holds if $[G:H]< \infty$.
\end{thm}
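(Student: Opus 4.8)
The theorem has two parts. First: if $H \leq G$ and $H$ is not LERF, then $G$ is not LERF. The contrapositive is cleaner: I would show that LERF passes to arbitrary subgroups. So suppose $G$ is LERF and let $H \leq G$; I want to show $H$ is LERF. Take a finitely generated subgroup $K \leq H$. Since $K$ is also a finitely generated subgroup of $G$ and $G$ is LERF, $K$ is the intersection of finite-index subgroups of $G$, say $K = \bigcap_i G_i$ with $[G:G_i] < \infty$. Then I intersect down to $H$: each $G_i \cap H$ has finite index in $H$ (the index of $G_i \cap H$ in $H$ is at most the index of $G_i$ in $G$), and $\bigcap_i (G_i \cap H) = \left(\bigcap_i G_i\right) \cap H = K \cap H = K$ since $K \subseteq H$. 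Hence $K$ is an intersection of finite-index subgroups of $H$, so $H$ is LERF. This establishes the first statement.

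For the converse under the hypothesis $[G:H] < \infty$: assume $H$ is LERF and $[G:H] < \infty$; I must show $G$ is LERF. Let $K \leq G$ be finitely generated. Then $K \cap H$ has finite index in $K$, hence is also finitely generated (finite-index subgroups of finitely generated groups are finitely generated, e.g.\ by Reidemeister--Schreier or the Nielsen--Schreier-type argument via a Schreier transversal). Since $K \cap H \leq H$ and $H$ is LERF, $K \cap H$ is separable in $H$, i.e.\ the intersection of finite-index subgroups of $H$; and finite-index subgroups of $H$ are finite-index in $G$ because $[G:H] < \infty$, so $K \cap H$ is separable in $G$ as well. The remaining task is to upgrade separability of $K \cap H$ to separability of $K$ itself.

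The key point is a standard lemma: in a group $G$, if $L$ is a separable subgroup and $L \leq K \leq G$ with $[K : L] < \infty$, then $K$ is separable in $G$. The plan for this lemma: write $K = \bigsqcup_{j=1}^m L k_j$ as a union of finitely many cosets of $L$. Given $g \in G \setminus K$, I want a finite-index subgroup of $G$ containing $K$ but not $g$. For each $j$, the element $g k_j^{-1}$ is not in $L$ (since $g \notin K = \bigsqcup_j L k_j$ means $g k_j^{-1} \notin L$ for every $j$), so by separability of $L$ there is a finite-index $N_j \leq G$ with $L \subseteq N_j$ and $g k_j^{-1} \notin N_j$. Take $N = \bigcap_j N_j$, still finite index, still containing $L$. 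Then replace $N$ by $N' = \bigcap_{j} k_j^{-1} N k_j \cap N$ — actually it is cleaner to take the normal core of $N$ in $K$, or simply observe: since $L \subseteq N$ and $[K:L]<\infty$, I can choose from the outset $N$ to be contained in $K$ by intersecting with a finite-index normal-in-$G$ subgroup inside $K$... I'll instead argue directly that $\langle N, K\rangle$ has finite index and misses $g$ after a further intersection. The honest cleanest route: take $N$ finite-index in $G$ with $L \subseteq N$; then $NK/N \cong K/(K\cap N)$ and one checks $g \notin NK$ for a suitable such $N$ by the coset argument above applied to the finitely many cosets — this yields $NK$ a finite-index subgroup (it is a union of finitely many cosets of $N$) containing $K$ and avoiding $g$. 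Applying separability of $K\cap H$ in $G$ with $K \cap H \leq K$ of finite index then gives the result.

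\textbf{Main obstacle.} The genuinely delicate step is the last one: promoting ``$K\cap H$ is separable in $G$'' to ``$K$ is separable in $G$'' when $[K : K\cap H] < \infty$. The subtlety is that the finite-index subgroups of $G$ witnessing separability of $K \cap H$ need not contain $K$, so one cannot directly read off a separating subgroup for $K$; the coset-counting argument above (choosing one finite-index subgroup per coset representative and taking a product $NK$ rather than an intersection) is what repairs this, but care is needed to confirm $NK$ is actually a subgroup of finite index — which works because $L = K\cap H \subseteq N$ forces $NK$ to be a finite union of cosets of $N$. I would also need the elementary but essential fact that finite-index subgroups of finitely generated groups are finitely generated, so that $K \cap H$ is a legitimate input to the LERF hypothesis on $H$.
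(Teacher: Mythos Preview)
The paper does not actually prove this theorem: it is quoted verbatim as \cite[Lemma~1.1]{S} and used as a black box throughout, so there is no ``paper's own proof'' to compare against. What you have written is the standard argument and is essentially correct.

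One small clean-up for the second half. In your final step you want $NK$ to be a finite-index \emph{subgroup} of $G$ containing $K$ and missing $g$; the observation that $NK$ is a finite union of right cosets of $N$ (because $L\subseteq N$) does not by itself make $NK$ a subgroup. The painless fix is to take $N$ \emph{normal} of finite index from the outset: for each coset representative $k_j$ choose a finite-index $M_j\le G$ with $L\subseteq M_j$ and $gk_j^{-1}\notin M_j$, replace each $M_j$ by its normal core, and set $N=\bigcap_j \mathrm{core}_G(M_j)$. Then in the finite quotient $G/N$ the image of $g$ lies outside the image of every $Lk_j$, hence outside the image of $K$; equivalently $g\notin KN=NK$, which is now genuinely a subgroup of finite index. (Phrased topologically: each $Lk_j$ is closed in the profinite topology as a translate of the closed set $L$, and a finite union of closed sets is closed, so $K$ is closed.) With that adjustment your proof is complete.
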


The disk case is already fully solved.

\begin{thm}[{\cite{DM}}]\label{thm:disk}
If $S$ is the disk, $B_n(S)$ is LERF if and only if $n\leq 3$.
\end{thm}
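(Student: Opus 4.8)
The plan is to prove the two implications separately. Since $S$ is the disk, $B_n(S)$ is the classical Artin braid group $B_n$ (see \rerems{pi1}), and I will use its standard structure freely. For the ``if'' direction, assume $n\le 3$. The cases $n=1$ (where $B_1(S)=\pi_1(S)$ is trivial, hence LERF) and $n=2$ (where $B_2(S)\cong\mathbb{Z}$ is abelian, hence LERF by \cite{Mal}) are immediate, so the real point is $n=3$. Here I would use the well-known description $P_3(S)\cong F_2\times\mathbb{Z}$ coming from the iterated Fadell--Neuwirth short exact sequences (see \cite{GPi}), together with the fact that $F_2\times\mathbb{Z}$ is LERF --- for instance because it is the fundamental group of $(\text{pair of pants})\times S^1$, a Seifert fibered $3$-manifold, and such groups are known to be subgroup separable (see, e.g., \cite{S2}). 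Since $[B_3(S):P_3(S)]=|S_3|=6<\infty$, the converse part of \rethm{lerf1} then yields that $B_3(S)$, and hence $B_n(S)$ for every $n\le 3$, is LERF.

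For the ``only if'' direction, assume $n\ge 4$. I would start from the classical fact that $F_2\times F_2$ is \emph{not} LERF: it contains a finitely generated subgroup with unsolvable membership problem, which is impossible in a subgroup separable group since LERF forces solvability of the generalized word problem (see the Introduction and \cite{FW}). The crucial ingredient is then that $B_4(S)$ contains a subgroup isomorphic to $F_2\times F_2$; I would exhibit such a copy inside the pure braid group $P_4(S)$, taking care in the choice of generators of the two free factors --- note that the naive subgroups generated by pure braids supported on two disjoint pairs of strands only produce a copy of $\mathbb{Z}^2$, so something more delicate is required. Granting this, the first part of \rethm{lerf1} shows that $B_4(S)$ is not LERF, and then the standard inclusion $B_4(S)\hookrightarrow B_n(S)$ obtained by keeping $n-4$ strands stationary, together with \rethm{lerf1} once more, shows that $B_n(S)$ is not LERF for every $n\ge 4$.

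The hard part will be this last geometric step: producing an honest copy of $F_2\times F_2$ inside $B_4(S)$ (equivalently, inside $P_4(S)$). This is where the real content lies; once such a subgroup is in hand, the rest is a routine combination of \rethm{lerf1} with the classical separability facts for $\mathbb{Z}$, for free groups, and for $F_2\times\mathbb{Z}$, and with the non-separability of $F_2\times F_2$.
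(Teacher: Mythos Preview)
Your ``if'' direction is fine and matches what the paper records elsewhere: $P_3(D^2)\cong F_2\times\mathbb{Z}$ is LERF, and passing to the finite-index overgroup $B_3(D^2)$ via \rethm{lerf1} gives the result.

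The ``only if'' direction, however, has a genuine gap at $n=4$. You propose to exhibit a copy of $F_2\times F_2$ inside $P_4(D^2)$ (hence inside $B_4(D^2)$), but this is impossible: Akimenkov~\cite{Ak} proved that $F_2\times F_2$ is \emph{not} a subgroup of $B_4(D^2)$, a fact the present paper uses explicitly. Makanina's construction (see \rerem{maka}) only produces $F_2\times F_2$ inside $B_n(D^2)$ for $n\ge 5$, so your argument goes through for $n\ge 5$ but cannot be salvaged for $n=4$ along these lines. The paper itself does not supply a proof here --- it simply quotes the result from~\cite{DM} --- but the argument in~\cite{DM} necessarily uses a different obstruction for $n=4$; indeed the paper later alludes to the fact that $\overline{P_4}=P_4/Z(P_4)$ is already known from~\cite{DM} to be non-LERF, and since $P_4\cong\overline{P_4}\times\mathbb{Z}$ this (together with \rethm{lerf1}) handles $n=4$ without any appeal to $F_2\times F_2$.
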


When investigating subgroup separability of braids, it is enough to deal with the pure case, since they form a finite index subgroup of the full group.

\begin{cor}\label{cor:braidpure}
The (full) surface braid group $B_n(S)$ (resp. the virtual braid group $VB_n$) is LERF if and only if the pure surface braid group $P_n(S)$ (resp. the pure virtual braid group $VP_n$) is.
\end{cor}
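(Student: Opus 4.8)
The plan is to deduce this immediately from \rethm{lerf1}, using the short exact sequences that exhibit the pure braid group as a finite-index subgroup of the full braid group. For the surface case, recall from \req{ses} that $P_n(S)$ is the kernel of $\map{\sigma}{B_n(S)}[S_n]$, so $[B_n(S):P_n(S)] = |S_n| = n! < \infty$; the analogous statement $[VB_n:VP_n] = n! < \infty$ holds for virtual braid groups, where $VP_n$ is by definition the kernel of the natural projection $VB_n \to S_n$. So in both cases we have a subgroup of finite index.

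Now apply \rethm{lerf1} with $H = P_n(S)$ (resp. $H = VP_n$) and $G = B_n(S)$ (resp. $G = VB_n$). The first part of that theorem gives that if $P_n(S)$ is not LERF then $B_n(S)$ is not LERF. The converse part, which requires $[G:H] < \infty$, gives that if $B_n(S)$ is not LERF then $P_n(S)$ is not LERF — equivalently, if $P_n(S)$ is LERF then $B_n(S)$ is LERF. Combining these two implications yields that $B_n(S)$ is LERF if and only if $P_n(S)$ is LERF, and identically in the virtual setting. This is essentially a one-line argument once the finite-index fact is in hand.

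I do not anticipate a genuine obstacle here; the only point needing a word of care is the justification that $VP_n$ has finite index in $VB_n$, which should be recalled (or cited) from the definition of virtual braid groups given in \resec{vbg}, just as the surface case is immediate from \req{ses}. One might also note for completeness that LERF (hence the hypothesis of \rethm{lerf1}) is preserved in the relevant direction regardless of which specific finite group the quotient is, so the argument is uniform across all $n$ and all surfaces $S$.
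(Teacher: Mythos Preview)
Your proposal is correct and follows essentially the same approach as the paper: the paper's proof simply invokes the fact that the pure braid group is a finite-index subgroup of the full braid group (the ``remark above'') and then applies \rethm{lerf1}. Your write-up is slightly more explicit about the index being $n!$ and about using both directions of \rethm{lerf1}, but the argument is identical in substance.
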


\begin{proof}
It follows from the remark above and Theorem \ref{thm:lerf1}.
\end{proof}

\begin{cor}\label{cor:puredisk}
If $S$ is the disk, $P_n(S)$ is LERF if and only if $n\leq 3$.
\end{cor}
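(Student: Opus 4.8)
The plan is to combine the two results immediately preceding this corollary. By \rethm{disk}, for $S$ the disk the full braid group $B_n(S)$ is LERF precisely when $n\leq 3$. On the other hand, \recor{braidpure} asserts that $B_n(S)$ is LERF if and only if the pure braid group $P_n(S)$ is LERF, the point being that $P_n(S)$ is a finite index (normal) subgroup of $B_n(S)$ via the short exact sequence \reqref{ses}, so \rethm{lerf1} applies in both directions.

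Concretely, I would argue as follows. Suppose $n\leq 3$. Then $B_n(S)$ is LERF by \rethm{disk}, hence $P_n(S)$ is LERF by \recor{braidpure} (equivalently, directly by the ``converse'' half of \rethm{lerf1}, since $[B_n(S):P_n(S)]=n!<\infty$). Conversely, suppose $P_n(S)$ is LERF; then $B_n(S)$ is LERF by \recor{braidpure}, and \rethm{disk} forces $n\leq 3$. This establishes both implications.

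There is essentially no obstacle here: the statement is a formal consequence of \rethm{disk} and \recor{braidpure}, and the only thing to be careful about is making sure the finite index hypothesis of \rethm{lerf1} is in place, which it is because $P_n(S)=\ker\sigma$ has index $n!$ in $B_n(S)$. So the ``hard part'' was already done in establishing \recor{braidpure} (the remark that pure braids form a finite index subgroup) and in the cited \rethm{disk}; the corollary itself is just the transport of the classification along that index-$n!$ inclusion.

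\begin{proof}
By \rethm{disk}, $B_n(S)$ is LERF if and only if $n\leq 3$. By \recor{braidpure}, $B_n(S)$ is LERF if and only if $P_n(S)$ is LERF. Combining these two equivalences, $P_n(S)$ is LERF if and only if $n\leq 3$.
\end{proof}
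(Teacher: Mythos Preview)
Your proof is correct and is exactly the argument the paper gives: the corollary follows immediately from \rethm{disk} together with \recor{braidpure}.
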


\begin{proof}
It follows from Theorem \ref{thm:disk} and Corollary \ref{cor:braidpure}.
\end{proof}

Althought the direct product of two LERF groups is not always LERF, it is true for some specific  direct products - and free products.

\begin{thm}[{\cite[Corollary~1.2]{Bu}}]\label{thm:lerf2}
A free product of LERF groups is LERF.
\end{thm}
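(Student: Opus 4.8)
The plan is to use the profinite-topology characterization of LERF recorded above: a group $G$ is LERF precisely when every finitely generated subgroup is closed in the profinite topology, which is equivalent to saying that for every finitely generated $H \leq G$ and every $g \in G \setminus H$ there is a homomorphism $\phi \colon G \to Q$ to a finite group with $\phi(g) \notin \phi(H)$. Indeed, given such a $\phi$, the set $K = \phi^{-1}(\phi(H))$ is a finite-index subgroup containing $H$ but not $g$. It suffices to treat a free product of two factors $G = A * B$: finite free products then follow by induction (writing $A * B * C = (A*B) * C$), and an arbitrary free product reduces to the finite case because a given finitely generated $H$ and a given $g$ together involve only finitely many factors, onto whose free product $G$ retracts. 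So I would fix $A, B$ LERF, a finitely generated $H \leq A * B$, and $g \notin H$, and aim to build the finite quotient $\phi$.

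The structural backbone is the Kurosh subgroup theorem: any $H \leq A * B$ decomposes as a free product $H = F * H_1 * \cdots * H_m$, where $F$ is free and each $H_j = H \cap x_j C_j x_j^{-1}$ is the intersection of $H$ with a conjugate of one of the factors $C_j \in \{A, B\}$. Because $H$ is finitely generated, this decomposition is finite and the free factor $F$ and each $H_j$ are themselves finitely generated. I would record this data geometrically via the action of $H$ on the Bass--Serre tree $T$ of the splitting $A * B$, whose vertex stabilizers are conjugate to $A$ or $B$ and whose edge stabilizers are trivial: the quotient core $H \backslash T$ is a finite graph of groups whose vertex groups are the $H_j$ (each a finitely generated subgroup of a conjugate of a factor) and whose loops account for the free part $F$.

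To manufacture $\phi$, I would work locally and then glue. Each $H_j$ sits inside a conjugate of $A$ or $B$, which is LERF, so $H_j$ is separable there; the free part is handled by Hall's theorem that free groups are LERF. Using these facts, together with separating the finitely many translates that witness $g \notin H$, I would produce finite-index subgroups of $A$ and of $B$ realizing the required local separations, and then assemble them into a single finite-index subgroup of $A * B$ that contains $H$ but omits $g$. Concretely, this amounts to completing the finite immersed graph of groups over $H \backslash T$ to a genuine finite-sheeted cover, that is, to a finite $G$-set whose point stabilizer is the desired $K$.

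The hard part will be the gluing step: ensuring that the locally chosen finite-index subgroups of $A$ and $B$ are mutually compatible, so that they patch into an honest finite-index subgroup of the free product, while simultaneously keeping $g$ outside. The decisive simplification is that the edge groups of the splitting are trivial, so there are no matching conditions to enforce along the edges of the graph of groups; this is exactly what makes free products behave so well, and why amalgamated products require extra hypotheses. I therefore expect the whole argument to reduce, after the Kurosh decomposition, to finitely many applications of separability in $A$, in $B$, and in a free group, followed by a purely combinatorial assembly of the resulting finite quotients.
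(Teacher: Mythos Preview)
The paper does not give its own proof of this statement: it is simply quoted as \cite[Corollary~1.2]{Bu} and used as a black box, so there is nothing in the paper to compare your argument against.

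On its own terms, your outline follows the standard modern route (Kurosh decomposition, Bass--Serre tree, completing a finite immersion to a finite cover), and the reductions you describe are correct. Be aware, though, that what you have written is a plan, not a proof: the step you yourself flag as ``the hard part''---actually producing the finite quotient of $A*B$ that contains $H$ and excludes $g$---is asserted but not executed. The trivial edge groups do make the gluing tractable, but you still need to spell out how the local data (finite-index subgroups of $A$ and $B$ separating finitely many elements from the $H_j$, together with a cover of the underlying graph handling the free part and the position of $g$) is packaged into a single finite $G$-set, and why $g$ really lands outside the stabilizer. Burns's original argument is purely combinatorial, predating the Bass--Serre language; if you want a clean modern write-up along the lines you sketch, the graph-of-groups completion argument (as in Scott--Wall or Wilton's notes) does exactly this, and it would be worth either citing it or filling in the assembly step explicitly.
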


\begin{lem}[{\cite[Lemma~3.6]{AL}}]\label{lem:lerf1}

Let $G$ be a group. If $G$ is LERF then $G\times \mathbb{Z}$ is LERF.
\end{lem}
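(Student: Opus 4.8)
We want to show that if $G$ is LERF then $G \times \mathbb{Z}$ is LERF. Let $H$ be a finitely generated subgroup of $G \times \mathbb{Z}$; we must write $H$ as an intersection of finite-index subgroups. The first step is to understand the structure of $H$ via the projection $\map{\pi}{G \times \mathbb{Z}}[\mathbb{Z}]$. The image $\pi(H)$ is a subgroup of $\mathbb{Z}$, hence either trivial or infinite cyclic. If $\pi(H) = \{0\}$, then $H \leq G \times \{0\} \cong G$, and since $G$ is LERF, $H$ is an intersection of finite-index subgroups of $G$, each of which times $\mathbb{Z}$ gives a finite-index subgroup of $G \times \mathbb{Z}$ whose intersection is $H \times \mathbb{Z}$; but we need $H$ itself, so we must also intersect with $G \times \{0\}$ — however that has infinite index. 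The fix: separate $H$ from a point $(g, m)$ with $m \neq 0$ using $\pi$ composed with reduction $\mathbb{Z} \to \mathbb{Z}/k\mathbb{Z}$ for suitable $k$, and separate $H$ from $(g,0)$ with $g \notin H$ using the LERF property of $G$ pulled back along the projection to $G$. So the trivial-image case is essentially immediate.

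The substantive case is $\pi(H) = d\mathbb{Z}$ for some $d \geq 1$. Pick $h_0 \in H$ with $\pi(h_0) = d$, say $h_0 = (a, d)$ with $a \in G$. Let $K = H \cap (G \times \{0\})$, which is the kernel of $\pi|_H$; since $H$ is finitely generated and $H/K \cong d\mathbb{Z}$ is infinite cyclic (hence $H \cong K \rtimes \mathbb{Z}$, but the action is by conjugation by $h_0$), the subgroup $K$ is also finitely generated — this is the standard fact that a finitely generated group extension of $\mathbb{Z}$ has finitely generated kernel (choose finitely many generators of $H$, write each as $k_i h_0^{e_i}$; the $k_i$ together with their $h_0$-conjugates generate $K$, but one must check finite generation carefully — actually $K$ is generated by the finitely many $k_i$ and we conjugate; the cleanest argument: $H = \langle h_0, k_1, \dots, k_r\rangle$ with $k_i \in K$, and then $K$ is normally generated in $H$ by the $k_i$, so $K = \langle h_0^j k_i h_0^{-j} : i, j \in \mathbb{Z}\rangle$; finite generation of $K$ is not automatic for general extensions but here $H/K \cong \mathbb{Z}$ and... in fact it IS automatic: any finitely generated group $H$ with $H/K$ infinite cyclic has $K$ finitely generated is FALSE in general (e.g. $F_2$), so one cannot claim this — instead we argue differently).

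Let me restate the core idea more robustly: view $H \leq K \rtimes \mathbb{Z}$ where $K = H \cap G$ need not be finitely generated, but we do not need it. Given $x = (g, m) \notin H$, we separate $x$ from $H$. If $m \notin d\mathbb{Z}$, compose with $\map{}{G\times\mathbb{Z}}[\mathbb{Z}/d\mathbb{Z}]$ via $(g,n) \mapsto n \bmod d$; this kills $H$-image... no, sends $H$ to $0$ and $x$ to $m \bmod d \neq 0$, giving a finite-index subgroup (preimage of $0$) containing $H$ but not $x$. If $m \in d\mathbb{Z}$, write $m = dj$ and consider $x' = x \cdot h_0^{-j} = (g a', 0)$ for the appropriate $a' \in G$; then $x' \notin H$ iff $x \notin H$ (since $h_0^j \in H$), and $x' \in G \times \{0\}$. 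Now we have reduced to separating a point of $G \times \{0\}$ from $H$. Project $p_1\colon G\times\mathbb{Z}\to G$; then $p_1(H)$ is a subgroup of $G$ but possibly not finitely generated, so we cannot directly invoke LERF. The genuine obstacle is precisely this: $H \cap (G\times\{0\})$ and the projection $p_1(H)$ may fail to be finitely generated, so the naive strategy stalls. The resolution — and the heart of the proof, which I expect to be the main difficulty — is to use that $G\times\mathbb{Z}$ retracts onto $H$ in a controlled way, or equivalently to exhibit, for each bad $x$, a finite quotient $Q$ of $G\times\mathbb{Z}$ in which the image of $H$ avoids the image of $x$; one builds $Q$ as a product of a finite quotient of $G$ detecting $x'$ against the \emph{finitely many} generators of $H$ (using LERF of $G$ applied to the subgroup of $G$ generated by the $G$-coordinates of a finite generating set of $H$ together with $a$), and a finite cyclic quotient of $\mathbb{Z}$, then taking the subgroup of $Q$ generated by the images of the generators of $H$ — this is finite-index in $Q$ and its preimage is a finite-index subgroup of $G\times\mathbb{Z}$ containing $H$; one checks it misses $x$ by a winding-number / coset-counting argument. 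Carrying out this last verification cleanly is the crux; everything else is bookkeeping with the exact sequence $1 \to G \to G\times\mathbb{Z} \to \mathbb{Z} \to 1$.
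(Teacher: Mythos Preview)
The paper does not prove this lemma itself; it merely cites \cite[Lemma~3.6]{AL}, so there is no in-paper argument to compare your attempt against.

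On the proposal: there is one slip and one genuine gap. The slip is the claim that $p_1(H)\leq G$ ``may fail to be finitely generated''. It cannot: $p_1(H)$ is a homomorphic image of the finitely generated group $H$, hence is generated by the $G$-coordinates of any finite generating set of $H$ --- and you tacitly use exactly this two sentences later when you speak of ``the subgroup of $G$ generated by the $G$-coordinates of a finite generating set of $H$''. What \emph{can} fail to be finitely generated is $K=H\cap(G\times\{0\})$, and that is the real difficulty.

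The gap is that your closing paragraph defers the entire content of the lemma to an unperformed ``winding-number / coset-counting argument''. You have not said how to choose the finite quotient of $G$ or the modulus on $\mathbb{Z}$ so that the image of $x$ lies outside the image of $H$, and in the hard sub-case ($g\in p_1(H)$, $m\in d\mathbb{Z}$) this is exactly the point. The missing ingredient is this: since $G$ is LERF and $L=p_1(H)$ is finitely generated, every finite-index subgroup $L'$ of $L$ has the form $L\cap G'$ for some finite-index $G'\leq G$ (such an $L'$ is itself finitely generated, hence separable in $G$, and a finiteness argument in the lattice of subgroups between $L'$ and $L$ produces a single $G'$). Equivalently, the profinite topology of $G$ induces the full profinite topology on $L$. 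Granting this, one first separates $x$ from $H$ inside $L\times\mathbb{Z}$ --- which is straightforward, since either $H\cap(\{1\}\times\mathbb{Z})=\{1\}\times e\mathbb{Z}$ with $e>0$, forcing $K$ to have finite index in $L$ (hence to be finitely generated), or $e=0$ and $H$ is the graph of a homomorphism $\phi\colon L\to\mathbb{Z}$ --- and then promotes the separating finite-index subgroup of $L\times\mathbb{Z}$ to one of $G\times\mathbb{Z}$ via a suitable map $G\times\mathbb{Z}\to (G/M)\times(\mathbb{Z}/N\mathbb{Z})$. Your outline neither isolates nor proves this ``induced topology'' step, so as written the argument does not close.
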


Theorem \ref{thm:lerf1} gives us the possibility of giving a negative answer for subgroup separabi\-lity of a group by analyzing its subgroups. There are two well-known groups that have been used for this purpose, which we present below.

\begin{lem}
The group $F_2\times F_2$ is not LERF
\end{lem}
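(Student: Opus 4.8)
The plan is to exhibit an explicit finitely generated subgroup of $F_2\times F_2$ that fails to be the intersection of finite-index subgroups; this is the classical construction of Mihailova, and the argument rests on reducing separability questions to decidability questions. First I would recall that if $\map{\phi}{F_2}[Q]$ is a surjection onto a finitely presented group $Q$ with unsolvable word problem (for instance a suitable choice using that $F_2$ surjects onto $F_2\times F_2$ and that there exist finitely presented groups with unsolvable word problem that are quotients of $F_2$), then one forms the \emph{fibre product}
\begin{equation*}
H=\set{(a,b)\in F_2\times F_2}{\phi(a)=\phi(b)},
\end{equation*}
which is the preimage of the diagonal of $Q\times Q$. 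The first key step is to check that $H$ is finitely generated: it is generated by the finitely many elements $(x,x)$ for $x$ running over a generating set of $F_2$, together with the finitely many elements $(r,1)$ where $r$ runs over the (finite) set of relators of $Q$ viewed as words in $F_2$. This is a short direct verification using that $\ker{\phi}$ is normally generated by the relators.

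The second, and main, step is to show $H$ is not separable, i.e.\ not closed in the profinite topology of $F_2\times F_2$. The idea is that membership of an element $(w,1)$ in $H$ is equivalent to $\phi(w)=1$ in $Q$, i.e.\ to $w$ representing the identity — so deciding membership in $H$ would decide the word problem of $Q$. If $H$ were separable, then for every $(w,1)\notin H$ there would be a finite-index subgroup $K\leq F_2\times F_2$ containing $H$ but not $(w,1)$; since one can algorithmically enumerate all finite-index subgroups of a finitely presented group (by enumerating finite permutation representations) and also algorithmically enumerate all elements of $H$ (it is finitely generated), separability would yield a decision procedure for ``$(w,1)\in H$'' and hence for the word problem in $Q$, a contradiction. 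I would present this as: separability of $H$ together with recursive enumerability of $H$ and of the finite-index subgroups implies $H$ has solvable membership problem, which is impossible. The main obstacle — and the only genuinely delicate point — is pinning down the enumeration/decidability bookkeeping cleanly (that finite-index subgroups of a f.g.\ group are recursively enumerable, and that one gets a genuine decision procedure rather than just a semi-decision procedure), so I would instead phrase the conclusion directly via the observation already recorded in the excerpt, namely that LERF implies solvability of the generalized word problem (the Mal'cev-type argument attributed to \cite{FW}); since $F_2\times F_2$ is known to contain finitely generated subgroups with unsolvable membership problem (Mihailova), it cannot be LERF.

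Concretely, then, the write-up would be: (1) fix a finitely presented group $Q$ with unsolvable word problem generated by two elements, and a surjection $\map{\phi}{F_2}[Q]$; (2) define the fibre product $H\leq F_2\times F_2$ and prove it is finitely generated by the explicit generating set above; (3) observe that $(w,1)\in H$ iff $\phi(w)=1$, so the membership problem for $H$ in $F_2\times F_2$ is at least as hard as the word problem for $Q$, hence unsolvable; (4) invoke the fact noted in the introduction that LERF implies solvability of the generalized word problem, so $F_2\times F_2$ cannot be LERF. Alternatively one can cite this as a known result (Mihailova), but giving the fibre-product construction makes the excerpt self-contained. I expect step (2) to be entirely routine and step (3)–(4) to carry the real content, with the only care needed being to state precisely which classical decision-theoretic fact is being quoted.
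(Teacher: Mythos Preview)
Your proposal is correct and follows essentially the same route as the paper: the paper's proof is the one-line observation that $F_2\times F_2$ has unsolvable generalized word problem by Mikhailova \cite{Mik2}, combined with the fact (recorded in the introduction via \cite{FW}) that LERF implies solvability of the generalized word problem. You simply unpack Mikhailova's fibre-product construction rather than citing it as a black box, but the logical architecture is identical.
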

\begin{proof}
The group $F_2\times F_2$ has unsolvable generalized word problem \cite{Mik2} hence it may not be LERF - see Section \ref{gwp} for more details.
\end{proof}

\begin{lem}[{\cite{BKS}}]\label{lem:Knotlerf}
The group $K=\apres{y, \alpha, \beta}{\alpha ^y=\beta\alpha, \beta^y=\beta}$ is not LERF.
\end{lem}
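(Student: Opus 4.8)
The statement is Lemma~\ref{lem:Knotlerf}: the group $K=\apres{y, \alpha, \beta}{\alpha^y=\beta\alpha, \beta^y=\beta}$ is not LERF. The cleanest route is to cite the classical result of Burns, Karrass and Solitar~\cite{BKS}, who produced precisely this group as the first example of a finitely presented group with a free subgroup that fails to be separable. So the whole content of the lemma is really a pointer to that paper, and the "proof" reduces to identifying $K$ with the group studied there and explaining why separability fails. I would therefore organise the argument around exhibiting the relevant non-separable subgroup and the obstruction to closing it up.

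First I would recognise the structure of $K$: the relation $\beta^y=\beta$ says $\beta$ commutes with $y$, and $\alpha^y=\beta\alpha$ shows that $\ang{\alpha,\beta}$ is normalised by $y$. In fact $N=\ang{\alpha,\beta}$ is free of rank $2$ (one checks this via the defining action, or by noting $K$ is an ascending HNN extension / a mapping-torus-type construction over $F_2$), and $K=N\rtimes\ang{y}\cong F_2\rtimes\mathbb{Z}$, where $y$ acts by the endomorphism $\alpha\mapsto\beta\alpha$, $\beta\mapsto\beta$. This automorphism is the one considered in~\cite{BKS}. The key step is then to take the subgroup $H=\ang{\alpha,y}$ (or the analogous two-generator subgroup from~\cite{BKS}) and show it is finitely generated but \emph{not} closed in the profinite topology on $K$: there is an element $g\notin H$ such that every finite-index subgroup of $K$ containing $H$ also contains $g$. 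Concretely, the iterates of $\alpha$ under conjugation by $y$ give $y^{-n}\alpha y^n=\beta^{-n}\alpha\beta^n$ inside $H$; pushing this through any finite quotient forces $\beta$ to have finite order in the image, which then drags an unwanted element into every finite-index subgroup over $H$. This is exactly the separability failure established in~\cite{BKS}, and once $K$ is identified with their group the conclusion is immediate.

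The main obstacle is not any computation but making sure the presentation given here really is the Burns--Karrass--Solitar group and not a superficially similar variant; I would double-check the action $\alpha\mapsto\beta\alpha$, $\beta\mapsto\beta$ against the presentation in~\cite{BKS} (possibly after an obvious change of generators) so that the citation is legitimate. After that, the non-separability of the distinguished finitely generated subgroup is quoted directly, and by definition of LERF (equivalently, by the profinite-topology characterisation recalled in Section~\ref{sec:prelim}) the group $K$ is not subgroup separable. Thus a two-line proof suffices: "This is the group constructed in~\cite{BKS}, which admits a finitely generated subgroup that is not an intersection of finite-index subgroups; hence $K$ is not LERF." Everything beyond that is exposition of why the construction works, which need not be reproduced.
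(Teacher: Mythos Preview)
Your overall approach matches the paper exactly: the paper gives no proof of this lemma at all, simply stating it with the citation~\cite{BKS} and then massaging the presentation into the form~\reqref{k} for later use. So your suggestion to reduce the proof to ``this is the Burns--Karrass--Solitar group; quote their result'' is precisely what the paper does.

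That said, your expository sketch of \emph{why} the BKS argument works contains a genuine slip. You propose $H=\ang{\alpha,y}$ as the non-separable subgroup, but this is all of $K$: from $y^{-1}\alpha y=\beta\alpha$ we get $\beta\alpha\in H$, and since $\alpha\in H$ also $\beta=(\beta\alpha)\alpha^{-1}\in H$, so $H$ contains every generator. Relatedly, the displayed conjugation formula is off: iterating $\alpha^y=\beta\alpha$ (and using $[\beta,y]=1$) gives $y^{-n}\alpha y^{n}=\beta^{n}\alpha$, not $\beta^{-n}\alpha\beta^{n}$. None of this matters for the lemma as stated in the paper, since the content is entirely the citation; but if you intend to keep the heuristic explanation, you should look up the actual non-separable subgroup used in~\cite{BKS} rather than $\ang{\alpha,y}$.
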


We shall use the following notation: $g^h = h^{-1}gh$. Then from \cite[Equation~(1)]{BKS} the group $K$ has generators $y, \alpha, \beta$ and relations $y^{-1}\alpha y = \alpha\beta$ and $y^{-1}\beta y = \beta$.   Taking the inverse of both relations we obtain $y^{-1}\alpha ^{-1} y = \beta^{-1}\alpha^{-1}$ and $y^{-1}\beta^{-1} y = \beta^{-1}$. Then, taking $\alpha_1=\alpha^{-1}$ and $\beta_1=\beta^{-1}$ we obtain the following presentation for $K$:
$$
K=\apres{y, \alpha_1, \beta_1}{y^{-1}\alpha_1 y = \beta_1\alpha_1, y^{-1}\beta_1 y = \beta_1}.
$$
Since $y$ commutes with $\beta_1$ then we may rewrite the first relation as $y \alpha_1 y^{-1} = \beta_1^{-1}\alpha_1$.
Then returning to $\beta$ (in fact, we are using the automorphism $y\mapsto y$, $\alpha_1\mapsto \alpha_1$ and $\beta^{-1}\mapsto \beta$) we obtain the following presentation for $K$:
\begin{equation}\label{eq:k}
K=\apres{y, \alpha_1, \beta}{y \alpha_1 y^{-1}=\beta\alpha_1, y \beta y^{-1}= \beta}.
\end{equation}

\section{Surface braid groups}\label{sec:sbg}

We start this section by establishing the language that we shall use repeatedly in this paper.
As in \cite{PR}, a compact surface $S$ will be called \emph{large} if it is different from
\begin{multicols}{3}
\begin{itemize}
	\item the sphere,
	\item the projective plane,
	\item the disk,
	\item the annulus,
	\item the torus,
	\item the Möbius strip, or
	\item the Klein bottle.
\end{itemize}
\end{multicols}
We shall call these seven surfaces \emph{non-large surfaces}.
From \cite[Proposition~1.6]{PR} follows that if $S$ is a large compact surface then the center  of the (pure) braid group ($P_n(S)$) $B_n(S)$ is the trivial group.

Let $Pants$ denote the sphere minus 3 points.
\begin{figure}[!htb]
\centering
\includegraphics[scale=0.5]{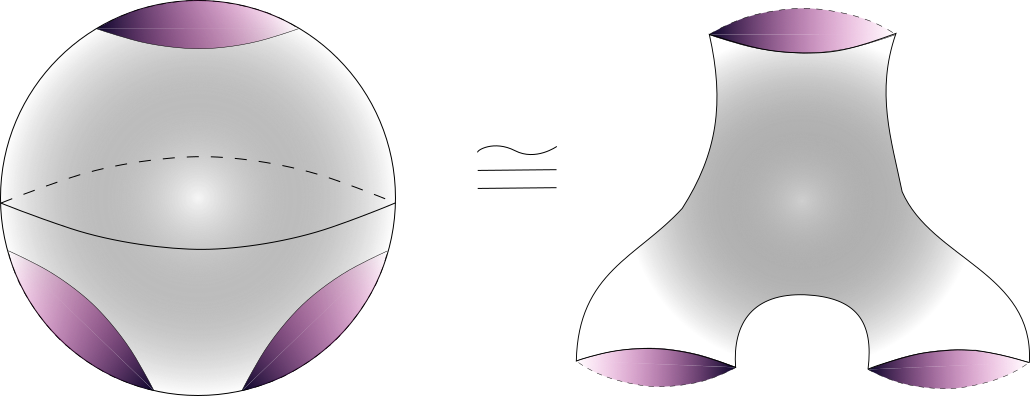}
\caption{The sphere minus 3 points}
\label{pant}
\end{figure}

\begin{rem}
We recall that a \textit{subsurface} $N$ of a surface $M$ is the closure of an open subset of $M$.
Let $x\in N$. Let $N$ be a subsurface of $M$ such that $\pi_1(N,x)\neq \{1\}$.
The inclusion $N\subseteq M$ induces a homomorphism $\psi\colon \pi_1(N,x)\to \pi_1(M,x)$ that is injective if and only if none of the connected components of the closure $\overline{M\setminus N}$ of $M\setminus N$ is a disk, see \cite[Proposition~2.1]{PR}.
Using this result, we note that the fundamental group of large surfaces contains a copy of the rank two free group, since the pants surface is a subsurface of them and $\pi_1(Pants)$ is a free group of rank 2.
\end{rem}

\subsection{The Fadell-Neuwirth short exact sequence}\label{ss:fadell}

Let $S$ be a connected surface and let $n\in \mathbb{N}$.
If $m\geq 1$, the map $p\colon F_{n+m}(S)\to F_n(S)$, of the configuration space $F_{n+m}(S)$ onto $F_n(S)$, defined by $p(x_1,\ldots,x_n,\ldots,x_{n+m}) = (x_1,\ldots,x_n)$ induces a homomorphism $p_{\ast}\colon P_{n+m}(S)\to P_n(S)$.
The homomorphism $p_{\ast}$ geometrically ``forgets'' the last $m$ strings.
If $M$ is without boundary, Fadell and Neuwirth showed that $p$ is a locally-trivial fibration \cite[Theorem~1]{FaN}, with fibre $F_m(M\setminus \{ x_1,\ldots,x_n \})$ over the point $(x_1,\ldots,x_n)$, which we consider to be a subspace of the total space via the map $i\colon F_m(M\setminus \{x_1,\ldots,x_n\})\to F_{n+m}(M)$ defined by $i((y_1,\ldots,y_m)) = (x_1,\ldots,x_n,y_1,\ldots,y_m)$.
Applying the associated long exact sequence in homotopy to this fibration, we obtain the Fadell-Neuwirth short exact sequence of pure braid groups:
\begin{equation}\label{eq:ses}
1\to P_{m}(S\setminus \{x_1,\ldots,x_n\}) \stackrel{i_{\ast}}{\longrightarrow} P_{n+m}(S) \stackrel{p_{\ast}}{\longrightarrow} P_n(S) \to 1
\end{equation}
where $n\geq 3$ if $S$ is the sphere \cite{F, FvB}, $n\geq 2$ if $S$ is the real projective plane \cite{vB}, and $n\geq 1$ otherwise \cite{FaN}, and $i_{\ast}$ is the homomorphism induced by the map $i$.
This sequence has been widely studied.
For instance, one question studied for many authors during several years was the splitting problem for surface pure braid groups: does the short exact sequence \reqref{ses} split?
The latter is completely solved, see \cite{GG} for more details, in particular its Theorem~2. Additional information on this sequence may be seen in \cite[Section~3.1]{GPi}.

In the following remarks, we record explicit information of some surface braid groups that we will use several times during the text. In many of them, it is a direct product decomposition using the splitting of the Fadell-Neuwirth short exact sequence with trivial action, since there is a section that sends generators of the quotient group into central elements of the respective surface braid group, see \cite[Theorem~2]{GG} and the references therein.
The free group of rank 2 will be denoted by $F_2$.

\begin{rems}\label{rems:sbg}
Suppose $n\geq 1$.

\begin{enumerate}
    \item Braid groups with few strings over the sphere and the projective plane are finite:
    \begin{itemize}
        \item $B_1(\St[2])$, $P_1(\St[2])$ and $P_2(\St[2])$ are trivial groups,  $B_2(\St[2])\cong \Z_2$, $B_3(\St[2])$ is isomorphic to $\Z_3\rtimes \Z_4$ with non-trivial action and $P_3(\St[2])\cong \Z_2$, see \cite{FvB} and also \cite[Section~4]{GPi}.

        \item $B_1(\R P^2)=P_1(\R P^2)=\pi_1(\R P^2)\cong \Z_2$, $B_2(\R P^2)$ has order 16 and is isomorphic to the generalized quaternions,and $P_2(\R P^2)$ is isomorphic to the quaternion group, see \cite{vB} and also \cite[Section~4]{GPi}.
    \end{itemize}

\item If $S$ is the sphere, then $P_{n+3}(S)\cong P_n(Pants)\times \BZ_2$.
In particular, $P_4(S)\cong P_1(Pants)\times \BZ_2\cong F_2\times \BZ_2$.

\item Suppose $S$ is the disk. It is an immediate consequence of the classical Artin presentation of $P_2(S)$ and $B_2(S)$ that they are isomorphic to $\BZ$, see \cite{A2} and also \cite{KM}.
Let $n\geq 3$. There is a decomposition $P_{n+2}(S)\cong P_n(Pants)\times \BZ$ that follows from the splitting of the Fadell-Neuwirth short exact sequence.
In particular, note that $P_3(S)\cong F_2\times \BZ$.

\item If $S$ is the annulus then $P_{n+1}(S)\cong P_n(Pants)\times \BZ$.
In particular, $P_2(S)\cong P_1(Pants)\times \BZ\cong F_2\times \BZ$.

\item If $S$ is the torus then $P_{n+1}(T)\cong P_n(T\setminus \{x_1\})\times \BZ^2$.
In particular, $P_2(T)\cong P_1(T\setminus \{x_1\})\times \BZ^2\cong F_2\times \BZ^2$.
\end{enumerate}

\end{rems}

\subsection{The direct product of free groups $F_2 \times F_2$ as a subgroup of surface braid groups}\label{ss:f2xf2}

Let us divide the connected, compact surfaces into three families.

\begin{itemize}
	\item[$\mathcal{F}_1$:] The seven non-large surfaces.

\item[$\mathcal{F}_2$:]
\begin{itemize}
	\item[i)] The pants surface,
	\item[ii)] The torus minus one point,
	\item[iii)] The projective plane minus two points,
    \item[iv)] The Klein bottle minus one point,
    \item[v)] The connected sum of 3 projective planes.
\end{itemize}
	
\item[$ \mathcal{F}_3$:] The connected, compact surfaces not considered in the families $\mathcal{F}_1$ and $\mathcal{F}_2$.

\end{itemize}

We start with the following useful result.

\begin{lem}\label{lem:pant}
Let $n\geq 1$.

\begin{enumerate}
\item If $S$ is a large surface (i.e. $S\in \mathcal{F}_2\cup \mathcal{F}_3$), then $P_n(Pants)$ is a subgroup of $P_n(S)$.

\item Let $X$ be a sphere minus four points.
If $S$ belongs to the family $ \mathcal{F}_3$, then $P_n(X)$ is a subgroup of $P_n(S)$.

\end{enumerate}

\end{lem}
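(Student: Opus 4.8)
The plan is to reduce both statements to the injectivity criterion for inclusions of subsurfaces recorded in the Remark: the inclusion $N \subseteq M$ induces an injection $\pi_1(N) \hookrightarrow \pi_1(M)$ provided no connected component of $\overline{M \setminus N}$ is a disk. If $N$ embeds in $M$ in this ``incompressible'' way, then passing to configuration spaces, the induced map $F_n(N) \hookrightarrow F_n(M)$ on ordered configuration spaces should likewise induce an injection $P_n(N) \to P_n(M)$ on pure braid groups; this is the standard fact that an incompressible subsurface inclusion induces injections on pure surface braid groups (it can be proved by the same argument as in \cite{PR}, or by induction on $n$ using the Fadell--Neuwirth fibration of Subsection~\ref{ss:fadell}, comparing the fibrations $F_n(N) \to F_{n-1}(N)$ and $F_n(M) \to F_{n-1}(M)$ and applying the five lemma together with the $n=1$ case which is exactly the Remark).

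For part~(a): the pants surface $Pants$ is the sphere minus three points. I would exhibit $Pants$ as an incompressible subsurface of every large surface $S$. If $S \in \mathcal{F}_3$ this is routine: a large surface has enough genus or enough boundary/punctures that one can find an embedded pair of pants whose complement has no disk components. If $S \in \mathcal{F}_2$ one checks the five listed surfaces individually --- for the pants itself it is trivial; for the torus minus one point, the projective plane minus two points, the Klein bottle minus one point and $\#_3 \mathbb{R}P^2$ one draws an embedded pants (equivalently a once-punctured annulus, or a twice-punctured disk) and verifies the complement is not a union of disks. Then apply the subsurface-inclusion principle above with $N = Pants$, $M = S$, to get $P_n(Pants) \hookrightarrow P_n(S)$.

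For part~(b): $X$ is the sphere minus four points. I would show that $X$ embeds incompressibly in every $S \in \mathcal{F}_3$. The point is precisely that the surfaces in $\mathcal{F}_2$ are ``too small'' to contain such an $X$ incompressibly (indeed $X$ has the homotopy type of a wedge of three circles, while the $\mathcal{F}_2$ surfaces are homotopy equivalent to a wedge of two circles), whereas every surface in $\mathcal{F}_3$ --- being large but not on the short $\mathcal{F}_2$ list --- has strictly larger first Betti number or more boundary components, enough room to embed a four-holed sphere with no disk in the complement. I would make this precise via an Euler-characteristic / genus-and-boundary count: classify $S \in \mathcal{F}_3$ by its genus (orientable or not) and number of boundary components/punctures, and in each case explicitly place the four marked points-neighbourhoods. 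Then again invoke the subsurface principle to conclude $P_n(X) \hookrightarrow P_n(S)$.

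The main obstacle is the case analysis in part~(b): one must be sure the embedding of the four-holed sphere is \emph{incompressible}, i.e. that none of the complementary pieces is a disk, and this requires care precisely on the borderline surfaces where $\mathcal{F}_3$ meets $\mathcal{F}_2$ (for instance distinguishing $\#_3\mathbb{R}P^2 \in \mathcal{F}_2$ from $\#_4\mathbb{R}P^2 \in \mathcal{F}_3$, or the torus minus one point from the torus minus two points). I would handle this uniformly by working with a surface with boundary (legitimate by Remark~\ref{rem:pi1}(2)), cutting along a suitable curve or arc system to expose a four-holed sphere, and checking the complement has negative Euler characteristic on each component; the lift from the $n=1$ statement (the Remark) to all $n$ via the Fadell--Neuwirth comparison is then purely formal.
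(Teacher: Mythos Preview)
Your approach is essentially the same as the paper's: embed $Pants$ (resp.\ the four-holed sphere $X$) as an incompressible subsurface of $S$, and then invoke the Paris--Rolfsen result (\cite[Proposition~2.2]{PR}) that such inclusions induce injections on pure braid groups. The paper's proof is in fact terser than yours---it simply asserts the complement has no disk components (with a figure for the borderline $\mathcal{F}_2$ cases) and cites \cite[Proposition~2.2]{PR} directly, without the Fadell--Neuwirth/five-lemma alternative or the Euler-characteristic bookkeeping you outline---but the strategy is identical.
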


\begin{proof}

First, we note that for item (a) none of the connected components of the closure $\overline{S\setminus Pants}$ of $S\setminus Pants$ is a disk assuming that $S$ is a large surface.
Similarly, for item (b), none of the connected components of the closure $\overline{S\setminus X}$ is a disk, if $S$ belongs to the family $ \mathcal{F}_3$.
See Figure~\ref{pant} for an illustration of the cases of the torus  minus one point and the Klein bottle minus one point.

\begin{figure}[!htb]
\centering
\includegraphics[scale=0.5]{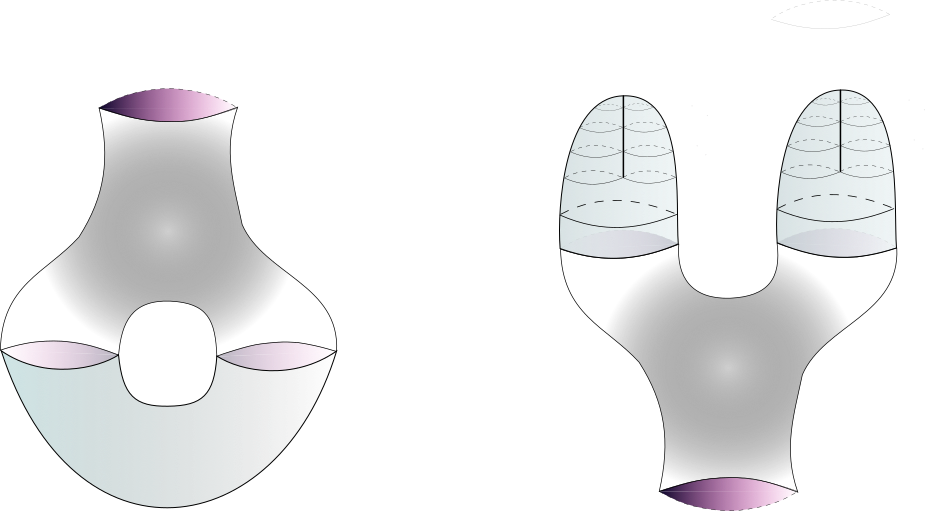}
\caption{The torus minus a point and the Klein bottle minus a point}
\label{pant}
\end{figure}

Hence, from \cite[Proposition~2.2]{PR} and its proof, it follows that $P_n(Pants)$ is a subgroup of $P_n(S)$ proving item (a) and also we conclude that $P_n(X)$ is a subgroup of $P_n(S)$ proving item (b).

\end{proof}

It is well known that the Cartesian product $F_2 \times F_2$ of two copies of the rank two free group is not a subgroup of surface groups. Next we study this problem for surface braid groups.
First, we start with the case of few strands for some specific surfaces.

\begin{prop}
   The Cartesian product $F_2 \times F_2$ of two copies of the rank two free group is not a subgroup of the surface braid groups:
   \begin{enumerate}
       \item $P_2(Pants)$,
       \item $P_n(S)$, where $S$ is the sphere and $n\leq 3$ or $S$ is the projective plane and $n\leq 2$.
   \end{enumerate}
\end{prop}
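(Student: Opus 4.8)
The plan is to treat each case by showing that the relevant surface braid group is either finite or has an abelian structure (or a solvable structure) that cannot contain $F_2\times F_2$, since that group is neither virtually solvable nor does it embed in groups without ``enough'' non-abelian structure.

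For item (b), the groups $P_n(\St[2])$ for $n\leq 3$ and $P_n(\rp)$ for $n\leq 2$ are all \emph{finite} by Remark~\ref{rems:sbg}(a): indeed $P_1(\St[2])$ and $P_2(\St[2])$ are trivial, $P_3(\St[2])\cong\BZ_2$, $P_1(\rp)\cong\BZ_2$, and $P_2(\rp)$ is the quaternion group of order $8$. A finite group obviously cannot contain the infinite group $F_2\times F_2$ as a subgroup, so this case is immediate once the explicit identifications from Remark~\ref{rems:sbg}(a) are invoked.

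For item (a), the key input is the Fadell--Neuwirth short exact sequence applied to the pants surface $Pants$ with $n=2$. Removing one point from $Pants$ gives a sphere minus four points $X$, which has free fundamental group of rank $3$; thus
\begin{equation*}
1\to F_3 \to P_2(Pants) \to P_1(Pants) \to 1,
\end{equation*}
where $P_1(Pants)=\pi_1(Pants)\cong F_2$. So $P_2(Pants)$ is an extension of a free group by a free group. The idea is to use the standard fact that such an extension (in fact any group admitting a surjection onto a free group $F$ with \emph{free} kernel) contains no subgroup isomorphic to $F_2\times F_2$: if $H\cong F_2\times F_2$ sat inside $P_2(Pants)$, then its image in $P_1(Pants)\cong F_2$ would be a f.g. subgroup of a free group, hence free, and the intersection $H\cap F_3$ would be a normal (in $H$) subgroup of a free group, hence free; one then derives a contradiction because $F_2\times F_2$ has abelian subgroups of rank $2$ (namely $\BZ\times\BZ$) that must map to cyclic subgroups of the quotient $F$ and intersect $F_3$ in a f.g. normal subgroup --- a $\BZ^2$ cannot be an extension of $\BZ$ by a free group unless that free group is $\BZ$, forcing $\BZ^2$ to be virtually $\BZ$, absurd. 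Equivalently and more cleanly: $F_2\times F_2$ is not ``freely indicable by free'' because it contains $\BZ^2$, and in an extension $1\to F\to G\to F'\to 1$ of free groups every abelian subgroup of $G$ is cyclic (the image in $F'$ is cyclic and the kernel part is a normal abelian subgroup of the free group $F$, hence trivial or $\BZ$, and an extension of cyclic by cyclic that embeds in such a $G$ must itself be cyclic by considering how $\BZ^2$ would have to split).

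The main obstacle I expect is making the last argument for (a) fully rigorous and self-contained, i.e. pinning down precisely why an extension of a free group by a free group contains no $\BZ\times\BZ$ (and hence no $F_2\times F_2$). The cleanest route is: in $1\to F_3\to P_2(Pants)\xrightarrow{p_*} F_2\to 1$, suppose $A\cong\BZ^2\leq P_2(Pants)$. Then $p_*(A)$ is abelian in $F_2$, hence cyclic, so $\ker(p_*|_A)=A\cap F_3$ has rank $\geq 1$ and is a f.g. subgroup of the free group $F_3$, hence free; but it is also abelian (being a subgroup of $\BZ^2$), so it is $\BZ$; thus $A$ is an extension $1\to\BZ\to A\to C\to 1$ with $C$ cyclic, which is fine --- so I actually need to rule out $F_2\times F_2$ directly rather than via $\BZ^2$. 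The correct statement to use is: if $1\to F\to G\to Q\to 1$ with $F$ free and $Q$ free, then $G$ cannot contain $F_2\times F_2$, because the image of $F_2\times F_2$ in $Q$ is free and one of the two $F_2$ factors must intersect $F$ nontrivially (as $Q$ is free, a map $F_2\times F_2\to Q$ kills a factor or has non-free-looking image); restricting to that factor $F_2'$ with $F_2'\cap F$ of finite index infinite... The genuinely safe argument is to quote that $F_2\times F_2$ has a subgroup (the kernel of $F_2\times F_2\to\BZ$, $(a,b)\mapsto\text{(exponent sum)}$) that is finitely generated but \emph{not} free and not a nontrivial free product, which is incompatible with being a subgroup of an extension of a free group by a free group via the Kurosh-type / Stallings structure of such extensions; alternatively, observe that $P_2(Pants)$ is the fundamental group of a graph of free groups and apply the standard fact that such groups do not contain $F_2\times F_2$. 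I will structure the proof around this graph-of-groups (or iterated free-by-free) description of $P_2(Pants)$, and the real work is verifying that description from the Fadell--Neuwirth sequence and citing the appropriate non-embedding result; the finite-group case (b) is then a one-line consequence of Remark~\ref{rems:sbg}(a).
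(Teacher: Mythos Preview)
Your treatment of item~(b) is correct and identical to the paper's: the groups in question are finite by \rerems{sbg}(a), so they cannot contain the infinite group $F_2\times F_2$.

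Your approach to item~(a), however, has a fatal gap. The ``standard fact'' you repeatedly invoke --- that an extension of a free group by a free group cannot contain $F_2\times F_2$ --- is simply false. The group $F_2\times F_2$ is \emph{itself} free-by-free: the subgroup $F_2\times\{1\}$ is a free normal subgroup with free quotient $\{1\}\times F_2\cong F_2$. Consequently, neither your $\Z^2$ argument, nor the Kurosh/Stallings sketch, nor the ``graph of free groups'' remark can possibly go through at the level of generality you state, since every one of these arguments, if valid, would exclude $F_2\times F_2$ from sitting inside itself. The short exact sequence
\[
1\to F_3 \to P_2(Pants) \to F_2 \to 1
\]
that you write down is correct, but free-by-free alone carries no obstruction to containing $F_2\times F_2$; something specific to \emph{this particular} extension is required.

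The paper supplies exactly that missing ingredient. It uses the Fadell--Neuwirth sequence in the other direction,
\[
1 \to P_2(Pants) \to P_4(D^2) \to P_2(D^2) \to 1,
\]
to embed $P_2(Pants)$ into $P_4(D^2)\leq B_4(D^2)$, and then invokes Akimenkov's theorem~\cite{Ak} that $F_2\times F_2$ is not a subgroup of $B_4$. Akimenkov's result is genuinely non-trivial and is the substantive input here; it cannot be replaced by soft structural arguments about free-by-free groups.
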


\begin{proof}
    \begin{enumerate}
       \item We consider the Fadell-Neuwirth short exact sequence
       $$
1\to P_2(Pants) \to P_{4}(D^2) \to P_2(D^2) \to 1.
$$
If $F_2 \times F_2$ is a subgroup of $P_2(Pants)$ then it is a subgroup of $P_4(D^2)$ and also a subgroup of $B_4(D^2)$. But this is a contradiction with the main theorem of \cite{Ak}, who showed that $F_2 \times F_2$ is not a subgroup of $B_4(D^2)$.

       \item  This item is obvious since the groups considered are finite.
   \end{enumerate}
\end{proof}

\begin{rem}\label{rem:maka}

We recall that Makanina constructed a subgroup $F_2(a,b) \times F_2(c,d)$ of the Artin braid group $B_n$, for $n\geq 5$, taking the following elements using the classical Artin presentation of $B_n$: $a=\sigma_3^2$, $b=\sigma_3\sigma_2^2\sigma_3$, $c=\sigma_4\sigma_3\sigma_2^2\sigma_3\sigma_4$ and $d=\sigma_4\sigma_3\sigma_2\sigma_1^2\sigma_2\sigma_3\sigma_4$, see  \cite[Proof of Theorem~1]{Mak}.
\end{rem}

Now, we move for the cases in which $F_2 \times F_2$ is a subgroup of surface braid groups.

\begin{thm}\label{thm:f2f2}
The direct product $F_2 \times F_2$ of two copies of the rank two free group is a subgroup of $P_n(S)$ if one of the following conditions is satisfied:
\begin{enumerate}
\item $S$ is a surface in  $\mathcal{F}_3$ and $n\geq 2$,

\item the surface $S$ belongs to $\mathcal{F}_2$ and $n\geq 3$,

\item $S$ belongs to $\mathcal{F}_1$, but different from the sphere, and $n\geq 5$,

\item $S$ is the sphere and $n\geq 6$.


\end{enumerate}

\end{thm}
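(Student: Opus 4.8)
The plan is to reduce each of the four cases to one of three configurations for which the conclusion is essentially already in hand: the disk (which carries Makanina's subgroup, \rerem{maka}), the pants, or a sphere minus four points. Two standard facts will be used repeatedly: (i) if $N$ is a subsurface of $S$ and no connected component of $\overline{S\setminus N}$ is a disk, then $P_n(N)$ is a subgroup of $P_n(S)$ (\cite[Proposition~2.2]{PR}, as in \relem{pant}); and (ii) if a surface $Y$ has a boundary component or a puncture, inserting a trivial strand near it yields a section of the Fadell-Neuwirth forgetful map, so $P_k(Y)\hookrightarrow P_{k+m}(Y)$ for every $m\geq0$. I would first settle case (c) and, simultaneously, every non-spherical compact surface with $n\geq5$: since $S$ is not the sphere, the closure of the complement of an embedded disk in $S$ is never a disk, so $P_n(D^2)\leq P_n(S)$; by \rerem{maka} there is a copy of $F_2\times F_2$ inside $P_n(D^2)$ for every $n\geq5$ (the four generators are pure braids), and composing the two inclusions proves the claim. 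In particular this already covers the $n\geq5$ parts of (a), (b) and (c).

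The heart of the matter is the assertion that $F_2\times F_2$ is a subgroup of $P_m(Pants)$ for every $m\geq3$. By Remarks~\ref{rems:sbg}(3) one has $P_{m+2}(D^2)\cong P_m(Pants)\times\BZ$, and since $m+2\geq5$, \rerem{maka} gives $F_2\times F_2\leq P_m(Pants)\times\BZ$; so it is enough to prove the elementary fact that $F_2\times F_2\leq G\times\BZ$ forces $F_2\times F_2\leq G$. For this, let $H\cong F_2\times F_2$ sit inside $G\times\BZ$, and set $N:=H\cap G$; then $H/N$ embeds in $\BZ$, hence is cyclic. If $H/N$ is trivial, then $H\leq G$. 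Otherwise $H/N\cong\BZ$, so the quotient map $H\to\BZ$ factors through the abelianization of $H$; writing $H=A\times B$ with $A\cong B\cong F_2$, this map cannot be trivial on both factors, and both $A\cap N$ and $B\cap N$ — the kernels of its restrictions to $A$ and $B$ — are normal subgroups of free groups, each being either the whole factor or nontrivial of infinite index, hence infinitely generated; in every case it is free of rank $\geq2$, so $(A\cap N)\times(B\cap N)\leq N\leq G$ already contains $F_2\times F_2$. Granting this, case (b) follows from $P_m(Pants)\leq P_m(S)$ for $S\in\mathcal{F}_2$ (\relem{pant}(a)), and case (d) follows by applying it with $n-3\geq3$ to the decomposition $P_n(\St[2])\cong P_{n-3}(Pants)\times\BZ_2$ of Remarks~\ref{rems:sbg}(2).

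For case (a) I would argue geometrically. By \relem{pant}(b), $P_2(X)\leq P_2(S)$ when $X$ is a sphere minus four points, and, inserting trivial strands, $P_2(X)\hookrightarrow P_n(X)\leq P_n(S)$ for all $n\geq2$; so it suffices to embed $F_2\times F_2$ into $P_2(X)$. Cut $X$ along a simple closed curve $c$ separating its four punctures into two pairs, producing two disjoint open sub-pants $A,B\subset X$ with $\pi_1(A)\cong\pi_1(B)\cong F_2$; each inclusion $A\hookrightarrow X$ and $B\hookrightarrow X$ is $\pi_1$-injective, since the complementary closure is the other closed pants, which is not a disk (\cite[Proposition~2.1]{PR}). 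As $A\cap B=\vide$, we have $A\times B\subseteq F_2(X)$, and the composite $\pi_1(A)\times\pi_1(B)=\pi_1(A\times B)\to\pi_1(F_2(X))=P_2(X)\to\pi_1(X)\times\pi_1(X)$ coincides with the injective map $(\iota_A)_\ast\times(\iota_B)_\ast$; hence $F_2\times F_2\cong\pi_1(A)\times\pi_1(B)$ embeds in $P_2(X)$, and therefore in $P_n(S)$.

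The step I expect to be most delicate is the passage from $P_m(Pants)\times\BZ$ back to $P_m(Pants)$, that is, the group-theoretic fact used above: it is exactly what makes the small-string cases work, and its proof rests on the classical statement that a finitely generated normal subgroup of a free group is either trivial or of finite index. The remaining ingredients should be routine: the reductions in (c) and (d) are formal consequences of \relem{pant} and Remarks~\ref{rems:sbg}, while the injectivity claim in (a) becomes immediate once phrased through the inclusion $F_2(X)\hookrightarrow X\times X$.
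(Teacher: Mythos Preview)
Your proof is correct, but it diverges from the paper's in cases (a) and (b). The paper does \emph{not} pass through the algebraic lemma ``$F_2\times F_2\leq G\times\BZ\Rightarrow F_2\times F_2\leq G$'' at all. Instead, it observes directly that Makanina's four pure braids $a,b,c,d\in P_5(D^2)$ lie in the kernel of the forgetful map $P_5(D^2)\to P_3(D^2)$ (forgetting the last two strings), which by Fadell--Neuwirth is exactly $P_2(X)$ with $X$ the four-holed sphere; this handles (a). For (b) the paper repeats the trick with the map $P_5(D^2)\to P_2(D^2)$, whose kernel is $P_3(Pants)$. Your route avoids checking that Makanina's specific words die under the forgetful maps, at the cost of the extra group-theoretic step; the paper's route is a one-line computation once you write $a,b,c,d$ in the $A_{i,j}$ generators. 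For (a) your disjoint-pants argument via $A\times B\subset F_2(X)$ is a clean alternative that the paper does not use; it has the virtue of making the embedding $F_2\times F_2\hookrightarrow P_2(X)$ completely transparent without any reference to Makanina. For (c) you cover the projective plane by the same disk-embedding argument (complement a M\"obius band), whereas the paper treats $\mathbb{R}P^2$ separately via the Fadell--Neuwirth sequence and item~(b); both are valid. Case (d) is handled identically.
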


\begin{proof}

We shall use the Fadell-Neuwirth short exact sequence \eqref{eq:ses}
$$
1\to P_{m}(S\setminus \{x_1,\ldots,x_n\}) \to P_{n+m}(S) \to P_n(S) \to 1.
$$
for $n\geq 3$ if $S$ is the sphere, for $n\geq 2$ if $S$ is the projective plane, and for $n\geq 1$ otherwise.

Let $n\geq 2$.

\begin{enumerate}
\item Let $X$ be the sphere minus four points. From \relem{pant} if a surface $S$ belongs to the family $ \mathcal{F}_3$ then $P_n(X)$ is a subgroup of $P_n(S)$.
So it is enough to prove $F_2\times F_2$ is a subgroup of $P_n(X)$.

We consider the Fadell-Neuwirth short exact sequence
$$
1\to P_2(X) \to P_{5}(D^2) \stackrel{\phi}{\longrightarrow} P_3(D^2) \to 1
$$
forgetting the last 2 strings of pure braids in $P_5(D^2)$.
Note that the elements $a$, $b$, $c$ and $d$ defined by Makanina (see \rerem{maka}) belong to the kernel of $\phi$, i.e., $F_2 \times F_2$ is a subgroup of $P_2(X)$. Since $P_2(X)$ is a subgroup of $P_n(X)$, for all $n\geq 2$, then the result follows.

\item Let $S$ be a surface in  $\mathcal{F}_2$.
To prove that if $n\geq 3$ then $F_2 \times F_2$ is a subgroup of $P_n(S)$ we may use a similar argument of the previous item, with Pants instead of $X$, but using in this case the Fadell-Neuwirth short exact sequence
$$
1\to P_2(Pants) \to P_{5}(D^2) \stackrel{\psi}{\longrightarrow} P_2(D^2) \to 1
$$
forgetting the last 3 strings of pure braids in $P_5(D^2)$.

\item Let $n\geq 5$. If $S$ is the disk the claim was proved by Makanina \cite{Mak}.
Suppose that $S$ is either the annulus, the torus, the M\"obius strip, or the Klein bottle.
Since the pure Artin braid group $P_n(D^2)$ is a subgroup of $P_n(S)$ (from \cite[Proposition~2.2]{PR} and its proof) then the result of this item follows.

Let $S$ be the projective plane and $n\geq 5$.
In this case we use item (b) of this theorem and the Fadell-Neuwirth short exact sequence
$$
1\to P_{n-2}(X) \to P_{n}(S) \to P_2(S) \to 1,
$$
where $X$ is the projective plane minus two points, to conclude the result.

\item Let $S$ be the sphere and let $n\geq 6$.
Using the Fadell-Neuwirth short exact sequence
$$
1\to P_{n-3}(Pants) \to P_{n}(S) \to P_3(S) \to 1
$$
and item (b) of this theorem we conclude the result for the sphere.
\end{enumerate}
\end{proof}

\subsection{Property (LR)}\label{ss:lr}

A subgroup $H$ of a group $G$ is called a \textit{retract} if there is a homomorphism
$\rho: G \to H$ which restricts to the identity map on $H$. This is equivalent to $G$ splitting as a semidirect product $N$ of $H$, where $N=\ker\rho$. In this case the map $\rho$ is called a \textit{retraction} of $G$ onto $H$.

Let $G$ be a group and let $H$ be a subgroup of G. We will say that $H$ is a \textit{virtual
retract} of $G$, denoted $H \leq_{vr} G$, if there exists a subgroup $K\leq G$ such that $|G:K|<\infty$, $H \subset K$ and $H$ is a retract of K.

If $G$ is a group, we say $G$ has \textit{property (LR) (local retractions)} if all finitely generated subgroups of $G$ are virtual retracts. This property was defined by Long and Reid \cite{LR} although it has been studied long before its explicit definition.

A virtual retract of a residually finite group is closed in the profinite topology \cite{Mi}, hence property (LR) implies subgroup separability. In fact, Scott \cite{S2} proved that all surface groups are LERF essentially by showing that they satisfy property (LR).

Every finite group is trivially (LR), since every subgroup has finite index. Every free group is (LR) \cite{H,Bu2} and (LR) is preserved by free products \cite{Gr}.

Although our main objective in this paper is to establish subgroup separability for braid groups, we can actually prove property (LR) for some of the pure braid groups, which is a stronger result.

 Property (LR) is not preserved by direct products - indeed $F_2\times F_2$ is not (LR) since it is not even LERF - however that is true if  one of the factors is virtually abelian. We recall a virtually abelian group is a group which contains an abelian subgroup of  finite index.

\begin{thm}[{\cite[Proposition~5.6]{Mi}}]\label{thm:Mi1}
Let $X$ be a finitely generated virtually abelian group and $Y$ a group satisfying (LR). Then $X\times Y$ satisfies (LR).
\end{thm}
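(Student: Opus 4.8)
\textbf{Proof proposal for \rethm{Mi1}.}

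The plan is to build a virtual retraction of $X\times Y$ onto an arbitrary finitely generated subgroup $H\leq X\times Y$, by combining the abelian-by-finite structure of $X$ with the hypothesis that $Y$ has (LR). First I would reduce to the case where $X$ is actually free abelian of finite rank: pass to a finite-index subgroup $A\leq X$ with $A\cong\BZ^k$ and normal in $X$, replace $X\times Y$ by the finite-index subgroup $A\times Y$, and replace $H$ by $H\cap(A\times Y)$, which still has finite index in $H$ and is still finitely generated; since property (LR) and the status "virtual retract" both behave well under passing to finite-index overgroups (a virtual retract of a finite-index subgroup is a virtual retract of the whole group), it suffices to treat $\BZ^k\times Y$.

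Next I would analyse the image of $H$ under the two coordinate projections. Let $\pi_1\colon \BZ^k\times Y\to \BZ^k$ and $\pi_2\colon \BZ^k\times Y\to Y$ be the projections, and set $H_2=\pi_2(H)\leq Y$, a finitely generated subgroup. By (LR) for $Y$ there is a finite-index subgroup $L\leq Y$ containing $H_2$ and a retraction $r\colon L\to H_2$. Consider the finite-index subgroup $\BZ^k\times L$ of $\BZ^k\times Y$ and the subgroup $H$ inside it; applying $\id_{\BZ^k}\times r$ gives a retraction of $\BZ^k\times L$ onto $\BZ^k\times H_2$, so after replacing $H$ by $H$ (unchanged) it is enough to find a virtual retraction of $\BZ^k\times H_2$ onto $H$. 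Now $H$ is a subgroup of $\BZ^k\times H_2$ which projects \emph{onto} the second factor $H_2$; its kernel $N=H\cap(\BZ^k\times\{1\})$ is a subgroup of $\BZ^k$, hence free abelian of rank $\ell\leq k$, and $H/N\cong H_2$. So $H$ sits in the short exact sequence $1\to N\to H\to H_2\to 1$ inside $\BZ^k\times H_2$.

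The remaining, and I expect genuinely technical, step is to produce the retraction at the level of the abelian direction. Since $N\leq\BZ^k$ is a direct summand up to finite index — that is, there is a finite-index subgroup $\BZ^k_0\leq\BZ^k$ with $N$ a direct summand of $\BZ^k_0$, say $\BZ^k_0=N\oplus C$ — I would pass to the finite-index subgroup $(\BZ^k_0\times H_2)\cap H$ of $H$ and the finite-index subgroup $\BZ^k_0\times H_2$ of $\BZ^k\times H_2$, so that now $N$ is an honest direct summand of the ambient free abelian group. Projection $\BZ^k_0\to N$ along $C$ extends to a homomorphism $q\colon\BZ^k_0\times H_2\to N$ by killing $H_2$; but this alone does not land in $H$ because $H$ also has "diagonal" elements $(\mathbf{a}_h,h)$ encoding the extension. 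To fix this I would instead use the splitting structure: since $H_2$ is a retract of $L$ and all our groups are built from that, I would choose coset representatives and define a map $\BZ^k_0\times H_2\to H$ by $(\mathbf{v},h)\mapsto (\text{correction}(h))\cdot(\text{proj}_N(\mathbf{v}),1)$, where $\mathrm{correction}\colon H_2\to H$ is a section of $H\twoheadrightarrow H_2$; the obstruction to this being a homomorphism is precisely a $2$-cocycle / a homomorphism-failure term valued in $N$, and because $N$ is a direct summand one can absorb it. The cleanest formulation: $H$ is itself finitely generated virtually abelian (being a subgroup of the finitely generated virtually abelian group $\BZ^k\times Y$ restricted appropriately — wait, $Y$ need not be abelian, so instead observe $H_2\leq Y$ has (LR)), so I would argue that $H\leq \BZ^k\times H_2$ with $H$ projecting onto $H_2$ forces, after a further finite-index reduction, a splitting $\BZ^k\times H_2\cong H\times (\BZ^k/N\text{-piece})$ as a direct product, from which the retraction onto $H$ is the obvious projection. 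The main obstacle is carrying out this last normal-form argument carefully while tracking that every reduction stays within finite index and remains compatible with the retraction $r$ already fixed on the $Y$-side; once the algebra of finitely generated abelian groups is set up so that $N$ splits off, the retraction and hence $H\leq_{vr}\BZ^k\times Y\leq_{vr}X\times Y$ follow, completing the proof that $X\times Y$ has (LR).
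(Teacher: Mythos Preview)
The paper does not give its own proof of this statement: \rethm{Mi1} is simply quoted as \cite[Proposition~5.6]{Mi} and used as a black box, so there is nothing in the present paper to compare your argument against.

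That said, a few comments on your sketch. The overall architecture --- reduce to $\BZ^k\times Y$, use (LR) on $Y$ to retract onto $\BZ^k\times H_2$, then handle the abelian direction --- is the natural one and is essentially what Minasyan does. But two points need care. First, your initial reduction replaces $H$ by the finite-index subgroup $H\cap(A\times Y)$; to conclude (LR) for $X\times Y$ you need $H$ itself, not just a finite-index subgroup of it, to be a virtual retract, and ``$H'\leq_{vr}G$ with $[H:H']<\infty$ implies $H\leq_{vr}G$'' is not automatic and must be justified (Minasyan does this via an auxiliary lemma). Second, and more seriously, the final step --- showing that $H$ is a virtual retract of $\BZ^k\times H_2$ when $H$ surjects onto $H_2$ with kernel $N\leq\BZ^k$ --- is exactly the heart of the matter, and you leave it as a heuristic (``the obstruction is a $2$-cocycle\ldots one can absorb it'', ``after a further finite-index reduction, a splitting\ldots''). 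The concrete mechanism is: pass to a finite-index $\BZ^k_0$ in which $N$ is a direct summand, observe that modulo $N$ the image of $H$ is the graph of a homomorphism $H_2\to\BZ^k_0/N$, and use that graphs of homomorphisms into abelian groups are retracts. Until that step is written out, the proof is incomplete.
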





Now we can deal with the cases below, by using some information of surface braid groups that was collected in Remarks~\ref{rems:sbg}.

\begin{thm}\label{thm:LRnlarge}
Let $S$ be a non-large surface.
\begin{enumerate}
	\item $P_2(S)$ is (LR) if $S$ is the disk, sphere, projective plane, annulus or torus. 
	\item $P_3(S)$ is (LR) if $S$ is the disk or the sphere.

 \item $P_4(S)$ is (LR) if $S$ is the sphere.

\end{enumerate}
\end{thm}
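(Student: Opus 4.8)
The plan is to verify each case by combining the explicit direct-product decompositions recorded in Remarks~\ref{rems:sbg} with the two key closure properties established in Section~\ref{sec:prelim} and the fact that free groups are (LR). The overall strategy: reduce every group in the statement to a direct product of the form $F_2 \times A$ with $A$ finitely generated virtually abelian (in fact $A \in \{1, \mathbb{Z}, \mathbb{Z}^2, \mathbb{Z}_2\}$), or to an even simpler group, and then invoke Theorem~\ref{thm:Mi1} together with the (LR)-ness of $F_2$.

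For item (a): When $S$ is the disk, $P_2(S) \cong \mathbb{Z}$ by Remarks~\ref{rems:sbg}(d), and $\mathbb{Z}$ is (LR) (it is free, or finitely generated abelian). When $S$ is the sphere, $P_2(\St[2])$ is trivial by Remarks~\ref{rems:sbg}(a), hence trivially (LR). When $S$ is the projective plane, $P_2(\R P^2)$ is the quaternion group by Remarks~\ref{rems:sbg}(a), hence finite and trivially (LR). When $S$ is the annulus, $P_2(S) \cong F_2 \times \mathbb{Z}$ by Remarks~\ref{rems:sbg}(d); since $\mathbb{Z}$ is finitely generated virtually abelian and $F_2$ satisfies (LR), Theorem~\ref{thm:Mi1} gives that $P_2(S)$ is (LR). When $S$ is the torus, $P_2(T) \cong F_2 \times \mathbb{Z}^2$ by Remarks~\ref{rems:sbg}(e), and the same application of Theorem~\ref{thm:Mi1} with $X = \mathbb{Z}^2$ and $Y = F_2$ finishes the case.

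For item (b): When $S$ is the disk, $P_3(S) \cong F_2 \times \mathbb{Z}$ by Remarks~\ref{rems:sbg}(d), so Theorem~\ref{thm:Mi1} applies as above. When $S$ is the sphere, $P_3(\St[2]) \cong \mathbb{Z}_2$ by Remarks~\ref{rems:sbg}(a), which is finite and hence trivially (LR). For item (c): when $S$ is the sphere, Remarks~\ref{rems:sbg}(b) gives $P_4(\St[2]) \cong P_1(Pants) \times \mathbb{Z}_2 \cong F_2 \times \mathbb{Z}_2$; since $\mathbb{Z}_2$ is finite (a fortiori finitely generated virtually abelian) and $F_2$ is (LR), Theorem~\ref{thm:Mi1} yields that $P_4(\St[2])$ is (LR).

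I do not expect a serious obstacle here: every case is a bookkeeping exercise once the decompositions of Remarks~\ref{rems:sbg} are in hand, and the only non-formal input is Theorem~\ref{thm:Mi1} plus the classical fact that free groups satisfy (LR). The one point requiring a little care is making sure each "virtually abelian" factor is genuinely \emph{finitely generated} — but $\mathbb{Z}$, $\mathbb{Z}^2$, and any finite group obviously are, so even that is immediate. The proof therefore amounts to listing the relevant isomorphism in each case and citing the appropriate closure result.
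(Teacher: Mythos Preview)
Your proof is correct and follows essentially the same approach as the paper: both identify each $P_n(S)$ via the decompositions in Remarks~\ref{rems:sbg} (finite, infinite cyclic, or $F_2 \times A$ with $A$ finitely generated abelian) and then invoke Theorem~\ref{thm:Mi1} together with the fact that free and finite groups are (LR). The only slip is a cross-reference label: the disk facts are in Remarks~\ref{rems:sbg}(c), not (d), but this has no mathematical effect.
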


\begin{proof}


\begin{enumerate}
\item
If $S$ is the disk then $P_2(S)=\BZ$ hence it is (LR).

If $S$ is the sphere or the projective plane then $P_2(S)$ is finite hence (LR).

If $S$ is the annulus then $P_2(S)=P_1(Pants)\times \BZ\cong F_2\times \BZ$ hence (LR) by Theorem~\ref{thm:Mi1}.

If $S$ is the torus then $P_2(T)=P_1(T\setminus \{x_1\})\times \BZ^2\cong F_2\times \BZ^2$ hence (LR) by Theorem~\ref{thm:Mi1}.

\item
If $S$ is the disk then $P_3(S)=F_2\times \BZ$ hence (LR) by Theorem~\ref{thm:Mi1}.

If $S$ is the sphere, then $P_3(S)$ is finite hence (LR).

\item
If $S$ is the sphere, then $P_4(S)=P_1(Pants)\times \BZ_2\cong F_2\times \BZ_2$ hence (LR) by Theorem~\ref{thm:Mi1}.
\end{enumerate}

\end{proof}

\subsection{Subgroup separability for surface braid groups}\label{ss:ss-for-sbg}

In this subsection, we determine completely  under which conditions surface braid groups are (or are not) LERF.
First, we consider the general case of large surfaces, then we study case by case the non-large surfaces. During this subsection we shall again use some information of surface braid groups that was collected in Remarks~\ref{rems:sbg}.

\begin{thm}\label{thm:large}

Let $S$ be a large surface and let $n\geq 2$. Then $P_n(S)$ is not LERF.
\end{thm}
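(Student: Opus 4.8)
The plan is to reduce to the fact that $F_2 \times F_2$ is not LERF (established just above in the excerpt) together with the subgroup criterion of Theorem~\ref{thm:lerf1}: if a group contains a non-LERF subgroup, it is itself not LERF. So it suffices to exhibit a copy of $F_2 \times F_2$ inside $P_n(S)$ for every large surface $S$ and every $n\geq 2$. This is exactly what Theorem~\ref{thm:f2f2} provides, but only for $n$ above a surface-dependent threshold (e.g.\ $n\geq 2$ for $\mathcal{F}_3$, $n\geq 3$ for $\mathcal{F}_2$), so the work is to push the result down to $n=2$ for every large surface.

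First I would split large surfaces into the families $\mathcal{F}_2$ and $\mathcal{F}_3$ (by definition a large surface lies in $\mathcal{F}_2\cup\mathcal{F}_3$). For $S\in\mathcal{F}_3$, Theorem~\ref{thm:f2f2}(a) already gives $F_2\times F_2 \leq P_n(S)$ for all $n\geq 2$, so those cases are done immediately. The remaining task is the five surfaces in $\mathcal{F}_2$ — the pants surface, the torus minus a point, the projective plane minus two points, the Klein bottle minus a point, and the connected sum of three projective planes — in the single outstanding case $n=2$. For each of these, $\pi_1(S)$ is a free group of rank $\geq 2$ (for the pants surface it is $F_2$; for the others it is free of rank $2$ or $3$ after removing punctures), and I would invoke the Fadell-Neuwirth short exact sequence \eqref{eq:ses}
$$
1\to \pi_1(S\setminus\{x_1\}) \to P_2(S) \stackrel{p_\ast}{\longrightarrow} P_1(S)=\pi_1(S)\to 1,
$$
valid since none of these surfaces is the sphere or the projective plane. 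The fibre $\pi_1(S\setminus\{x_1\})$ is again free of rank $\geq 2$, hence contains $F_2$; it injects into $P_2(S)$ via $i_\ast$. It then remains to produce a second copy of $F_2$ in $P_2(S)$ that commutes with this one and meets it trivially.

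The cleanest way to do this is to use the decompositions recorded in Remarks~\ref{rems:sbg}: when the Fadell-Neuwirth sequence for $P_2(S)$ splits with a section into the centre, $P_2(S)\cong \pi_1(S\setminus\{x_1\})\times Z$ where $Z\cong\pi_1(S)$ — e.g.\ $P_2(Pants)\cong F_2\times F_2$ directly, and similar product structures for the punctured torus and punctured Klein bottle. In the cases where the surface braid group literally contains $\pi_1(S\setminus\{x_1\})\times\pi_1(S)$, taking $F_2$ inside each factor gives $F_2\times F_2\leq P_2(S)$ at once. Alternatively, and more uniformly, I would note that each $S\in\mathcal{F}_2$ contains the pants surface as a subsurface with no disk components in the complement, so by \cite[Proposition~2.2]{PR} (the tool already used in Lemma~\ref{lem:pant}) $P_2(Pants)\leq P_2(S)$, and $P_2(Pants)$ already contains $F_2\times F_2$ by the Fadell-Neuwirth argument above applied to the pants surface itself. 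Then Theorem~\ref{thm:f2f2} extends this to all $n\geq 2$ by the usual inclusion $P_2(S)\leq P_n(S)$. Having placed $F_2\times F_2$ inside $P_n(S)$ for every large $S$ and every $n\geq 2$, Theorem~\ref{thm:lerf1} finishes the proof.

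The main obstacle is purely the base case $n=2$ for the $\mathcal{F}_2$ surfaces: Theorem~\ref{thm:f2f2} deliberately excludes it, so one cannot simply quote that theorem, and one must verify by hand — via the split Fadell-Neuwirth sequence or the subsurface inclusion of the pants surface — that $P_2(S)$ for each of these five punctured surfaces genuinely contains a product of two free groups of rank two. Once the pants case $P_2(Pants)\supseteq F_2\times F_2$ is pinned down (which is immediate from the product decomposition $P_2(Pants)\cong F_2\times F_2$), the subsurface inclusion result propagates it to all of $\mathcal{F}_2$, and the rest is bookkeeping with the sequence \eqref{eq:ses} and Theorem~\ref{thm:lerf1}.
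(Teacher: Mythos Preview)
Your approach has a genuine gap at the crucial base case. You assert that $P_2(Pants)\cong F_2\times F_2$ (or at least that $P_2(Pants)$ contains $F_2\times F_2$), and then propagate this via subsurface inclusion to the other surfaces in $\mathcal{F}_2$. But this is false: the paper explicitly proves (in the Proposition immediately preceding \rethm{f2f2}) that $F_2\times F_2$ is \emph{not} a subgroup of $P_2(Pants)$, because $P_2(Pants)$ embeds in $P_4(D^2)\leq B_4(D^2)$, and Akimenkov showed $F_2\times F_2\not\leq B_4(D^2)$. The Fadell--Neuwirth sequence
\[
1\to \pi_1(Pants\setminus\{x_1\})\to P_2(Pants)\to \pi_1(Pants)\to 1
\]
does not split as a direct product: for large surfaces the centre of $P_n(S)$ is trivial (\cite[Proposition~1.6]{PR}), so there is no ``section into the centre'' here. (Incidentally, the connected sum of three projective planes is closed, so its fundamental group is a surface group, not free.) Thus the entire $\mathcal{F}_2$, $n=2$ case of your plan collapses, and with it the propagation to the other $\mathcal{F}_2$ surfaces via the inclusion $P_2(Pants)\leq P_2(S)$.

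The paper sidesteps this obstruction by \emph{not} looking for $F_2\times F_2$ inside $P_2(Pants)$ at all. Instead it uses the decomposition $P_{n+2}(D^2)\cong P_n(Pants)\times\mathbb{Z}$ from Remarks~\ref{rems:sbg}(c): if $P_n(Pants)$ were LERF then $P_n(Pants)\times\mathbb{Z}\cong P_{n+2}(D^2)$ would be LERF by \relem{lerf1}, contradicting \recor{puredisk} for $n\geq 2$. Then \relem{pant}(a) gives $P_n(Pants)\leq P_n(S)$ for every large $S$, and \rethm{lerf1} finishes. The key difference is that the paper transfers non-LERFness \emph{from} the disk case rather than trying to embed $F_2\times F_2$ directly; this is exactly what lets it handle $n=2$ uniformly.
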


\begin{proof}

Let $n\geq 2$ and let $S$ be a large surface. We note that the result follows for some surfaces by a simple application of Theorem~\ref{thm:f2f2}. We shall give now a proof that covers all surfaces of the statement. First, we prove  that the pure surface braid group $P_n(Pants)$ is not LERF.
From the Fadell-Neuwirth short exact sequence \eqref{eq:ses} we obtain the decomposition $P_{n+2}=P_n(Pants)\times \mathbb{Z}$.
If $P_n(Pants)$ is LERF then from Lemma~\ref{lem:lerf1} also $P_{n+2}$ is. But this is a contradiction, since $P_{n+2}$ is not LERF  for $n\geq 2$, by Corollary~\ref{cor:puredisk}.


From Lemma~\ref{lem:pant} follows that $P_n(Pants)$ is a subgroup of $P_n(S)$.
By applying Theorem~\ref{thm:lerf1} we conclude  $P_n(S)$ is not LERF.
\end{proof}

We move to the case of non-large surfaces. We start considering the special case of the Klein bottle.

\begin{prop}\label{prop:kb}
The pure braid group of the Klein bottle with 2 strings $P_2(Kb)$ is not LERF.
\end{prop}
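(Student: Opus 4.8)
The plan is to exhibit inside $P_2(Kb)$ a subgroup isomorphic to one of the two ``forbidden'' groups recorded in Section~\ref{sec:prelim} — either $F_2\times F_2$ or the group $K$ of \relem{Knotlerf} — and then invoke \rethm{lerf1}. Since $P_2(Kb)$ is a relatively small group (a pure braid group on two strands over a small surface), my first step is to write down an explicit presentation for it. The Klein bottle $Kb$ has a well-known presentation of its fundamental group, and $P_2(Kb)$ fits into a Fadell--Neuwirth short exact sequence
\begin{equation*}
1\to \pi_1(Kb\setminus\{x_1\})\to P_2(Kb)\to \pi_1(Kb)\to 1,
\end{equation*}
where $Kb\setminus\{x_1\}$ is the once-punctured Klein bottle, whose fundamental group is free of rank $2$. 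So $P_2(Kb)$ is an extension of $\pi_1(Kb)$ by $F_2$, and from the (known) presentation of $P_2(Kb)$ in the literature I would extract generators and relations to work with concretely.

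The key step is then to identify, within this presentation, a copy of the non-LERF group $K=\apres{y,\alpha_1,\beta}{y\alpha_1 y^{-1}=\beta\alpha_1,\ y\beta y^{-1}=\beta}$ from \req{k}. The group $K$ is itself an extension of $\mathbb{Z}$ (generated by $y$) acting on $F_2=F(\alpha_1,\beta)$ by the automorphism $\alpha_1\mapsto\beta\alpha_1$, $\beta\mapsto\beta$ — that is, a transvection-type automorphism of the free group of rank $2$. My strategy is to locate such a configuration inside $P_2(Kb)$: take for the ``$F_2$'' part a suitable free subgroup of the fibre $\pi_1(Kb\setminus\{x_1\})\cong F_2$ (or of the normal closure of it), and for the ``$y$'' part an element coming from a loop generator of $\pi_1(Kb)$ whose conjugation action on that free subgroup realizes precisely the automorphism $\alpha_1\mapsto\beta\alpha_1$, $\beta\mapsto\beta$. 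Because the Klein bottle group already has a relation of the form $aba^{-1}=b^{-1}$, conjugation phenomena of this semidirect-product shape are built into $P_2(Kb)$, which makes it plausible that the needed subgroup is there; I would verify the matching of relations directly against \req{k}, and check that the resulting subgroup is genuinely isomorphic to $K$ (not a proper quotient) by confirming that the chosen generators satisfy no extra relations, e.g.\ by exhibiting a retraction onto it or by a normal-form/HNN argument in $P_2(Kb)$.

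I expect the main obstacle to be precisely this last point: writing down the candidate generators is easy, and checking that they satisfy the defining relations of $K$ is a routine (if fiddly) computation in the presentation of $P_2(Kb)$, but proving that the subgroup they generate is \emph{isomorphic} to $K$ rather than a quotient requires an argument that no collapsing occurs. The cleanest route is probably to produce an explicit retraction $P_2(Kb)\to K$ (or onto some overgroup of the copy of $K$) that is the identity on the chosen generators; alternatively, one can use the Fadell--Neuwirth sequence to realize $K$ as exactly the preimage in $P_2(Kb)$ of a cyclic subgroup of $\pi_1(Kb)$ acting suitably on the fibre, so that $K$ sits inside $P_2(Kb)$ as an honest subextension. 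Once the copy of $K$ is established, \rethm{lerf1} together with \relem{Knotlerf} immediately gives that $P_2(Kb)$ is not LERF.
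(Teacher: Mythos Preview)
Your plan is correct and is essentially the route the paper takes: use the Fadell--Neuwirth decomposition $P_2(Kb)=F_2(a_2,b_2)\rtimes s(\pi_1(Kb))$ coming from the presentation in \cite{GP}, locate inside it a copy of the group $K$ of \relem{Knotlerf}, and certify the embedding by a retraction. In the paper the authors first pass to a convenient finite-index subgroup (the preimage of the index-$2$ abelian subgroup $\langle a_1,b_1^2\rangle\le\pi_1(Kb)$, which splits off a central $\mathbb{Z}$), and then to a further index-$2$ subgroup obtained by Reidemeister--Schreier, before writing down $\phi\colon K\hookrightarrow N$ and a left inverse $\psi\colon N\to K$; this is exactly your ``explicit retraction'' strategy. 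Your alternative suggestion of realising $K$ directly as the subextension over a cyclic subgroup of $\pi_1(Kb)$ also works with the same presentation (take $y=a_1a_2$, $\alpha_1=b_2$, $\beta=a_2^{-2}$ and use the conjugation formula $a_1a_2\cdot b_2\cdot(a_1a_2)^{-1}=a_2^{-2}b_2$), so if anything your sketch is slightly more direct than the published argument.
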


\begin{proof}

We consider here the presentation of the braid groups of the Klein bottle given in \cite[Theorem~2.1]{GP}.
Also, we consider the section $s\colon P_n(Kb)\to P_{n+1}(Kb)$ of the epimorphism $P_{n+1}(Kb)\to P_n(Kb)$, that  geometrically forgets the last string of braids in $P_{n+1}(Kb)$, described in \cite[Proposition~5.1]{GP}.

By the results above, we may assume $P_1(Kb) = \langle a_1, b_1 \mid b_1a_1b_1^{-1} = a_1^{-1} \rangle$.

From \cite[p.~18]{GP} we have  $P_2(Kb)=F_2(a_2,b_2)\rtimes s(P_1(Kb))$, where
$$
s(P_1(Kb)) = \langle a_1a_2, b_2b_1 \mid (b_2b_1)(a_1a_2)(b_2b_1)^{-1} = (a_2a_1)^{-1} \rangle
$$
and the action (see \cite[eqn~(5.9)]{GP}) is given by
\begin{itemize}
    \item $a_1a_2\cdot a_2\cdot (a_1a_2)^{-1} = a_2$,
    \item $a_1a_2\cdot b_2\cdot (a_1a_2)^{-1} = a_2^{-2}b_2$,
    \item $b_2b_1\cdot a_2\cdot (b_2b_1)^{-1} = a_2^{-1}$,
    \item $b_2b_1\cdot b_2\cdot (b_2b_1)^{-1} = a_2b_2a_2$.
\end{itemize}

Let $p\colon P_2(Kb)\to P_1(Kb)$ denote the epimorphism that  geometrically forgets the last string of braids in $P_2(Kb)$. The kernel of $p$ is $F_2(a_2,b_2)$ the free group of rank 2.
Let $Q = \Z \times \Z$ be the finite index 2 subgroup of $P_1(Kb)$ with presentation $\langle a_1, b_1^2 \mid a_1b_1^2 = b_1^2 a_1 \rangle$.
Let $H = p^{-1}(Q)$ be the index 2 subgroup of $P_2(Kb)$.
With this information we have the following commutative diagram of short exact sequences:
$$
\xymatrix{
  &   & 1 \ar[d] & 1 \ar[d] & \\
1 \ar[r] & F_2(a_2,b_2) \ar[r] \ar@{=}[d] & H \ar[r]^{p|_{H}} \ar[d] & Q \ar[r] \ar[d] & 1\\
1 \ar[r] & F_2(a_2,b_2) \ar[r] & P_2(Kb) \ar[r]^{p} \ar[d]^{\psi'} & P_1(Kb) \ar[r] \ar[d]^{\psi} & 1\\
 &   & \Z_2 \ar@{=}[r] \ar[d] & \Z_2  \ar[d] & \\
  &   & 1  & 1  &
  }
$$
where $s\colon P_1(Kb)\to P_2(Kb)$ is a section of $p$ given by $a_1\mapsto a_1a_2$, $b_1\mapsto b_2b_1$,   $\psi\colon P_1(Kb)\to \Z_2$ is given by $a_1\mapsto \overline{0}$ and $b_1\mapsto \overline{1}$, and $\psi'=\psi\circ p$.

By using the method for presentation of extensions, given in \cite[Chapter~10]{J}, and using the presentations of the groups $F_2(a_2,b_2)$ and $Q=\langle a_1, b_1^2 \mid a_1b_1^2 = b_1^2 a_1 \rangle$ we have that the group $H$ has a presentation given by generators $a_2,\, b_2,\, a_1a_2,\, (b_2b_1)^2$ and relations
\begin{itemize}
    \item $(a_1a_2)(b_2b_1)^2 = (b_2b_1)^2(a_1a_2)$
    \item $a_1a_2\cdot a_2\cdot (a_1a_2)^{-1} = a_2$,
    \item $a_1a_2\cdot b_2\cdot (a_1a_2)^{-1} = a_2^{-2}b_2$,
    \item $(b_2b_1)^2\cdot a_2\cdot (b_2b_1)^{-2} = a_2$,
    \item $(b_2b_1)\cdot b_2\cdot (b_2b_1)^{-2} = b_2$.
\end{itemize}

So, by renaming generators, $P_2(Kb)$ has an index two subgroup $H=L\times \mathbb{Z}$ where $L = F_2(a,b)\rtimes \mathbb{Z}$ and  $\mathbb{Z}$ in the semi-direct product is generated by $c$ ($=a_1a_2$) which action is given by $cac^{-1}=a$ and $cbc^{-1}=a^{-2}b$.

Let $N$ be the index two subgroup of $L$ whose quotient is isomorphic to $\setang{a}{a^2=1}$.
Then, by using the Reidemeister-Schreier method (see \cite[Appendix~I.6]{KM}) with the transversal $\Lambda=\{ e, a \}$, we obtain the following presentation for $N$:
\begin{equation}\label{eq:n}
N=\setang{c, b, x, t }{c x c^{-1} = x, ctc^{-1}=x^{-1}t, cbc^{-1}=x^{-1}b}.
\end{equation}

We shall use the presentation of $K$ given by equation~\reqref{k}. Now, we consider the following homomorphisms $\phi\colon K\to N$ and $\psi\colon N\to K$ defined by
$$
\phi(y) = c,\, \phi(\alpha_1)=b\, \textrm{ and }\, \phi(\beta)=x^{-1}
$$
and
$$
\psi(c)=y,\, \psi(b)=\alpha_1,\, \psi(x)=\beta^{-1}\, \textrm{ and }\, \psi(t)=\alpha_1.
$$
Since the composition $\psi\circ \phi\colon K\to K$ is the identity homomorphism (in particular it is injective) then $\phi$ is injective and $K$ is isomorphic to a subgroup of $N$.
As a consequence, from \relem{Knotlerf} and \rethm{lerf1}, $P_2(Kb)$ is not LERF.

\end{proof}

Now, we state the general result for non-large surfaces.

\begin{thm}\label{thm:nlarge}
Let $S$ be a non-large surface.
\begin{enumerate}
	\item $P_2(S)$ is LERF if and only if $S$ is not the Klein bottle.
	\item $P_3(S)$ is LERF if and only if $S$ is either the disk, the sphere or the projective plane.
	\item $P_4(S)$ is LERF if and only if $S$ is the sphere.
	\item $P_n(S)$ is not LERF for every $n\geq 5$.
\end{enumerate}
\end{thm}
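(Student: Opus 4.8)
The plan is to prove each of the four items by combining the structural information about surface braid groups gathered in Remarks~\ref{rems:sbg} with the three main tools established above: the non-LERF criterion via subgroups (Theorem~\ref{thm:lerf1} together with the non-LERF groups $F_2\times F_2$ and $K$), the positive (LR) results of Theorem~\ref{thm:LRnlarge} (which imply LERF), and the Fadell-Neuwirth short exact sequence with its splitting (Remarks~\ref{rems:sbg}). The non-large surfaces are the sphere, the projective plane, the disk, the annulus, the torus, the M\"obius strip and the Klein bottle, so each item amounts to checking these seven surfaces (with the small-string exceptions for the sphere and projective plane, where the Fadell-Neuwirth sequence is only available for $n\geq 3$ resp. $n\geq 2$).

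For item~(a): the "if" direction follows from Theorem~\ref{thm:LRnlarge}(a), which gives (LR) — hence LERF — for $P_2(S)$ when $S$ is the disk, sphere, projective plane, annulus or torus; the remaining non-large surface besides the Klein bottle is the M\"obius strip, and by Remarks~\ref{rem:pi1}(2) its braid groups coincide with those of the annulus, so $P_2(Mb)\cong P_2(\text{annulus})$ is LERF as well. The "only if" direction is precisely Proposition~\ref{prop:kb}. For item~(b): $P_3(D^2)$ is LERF by Theorem~\ref{thm:LRnlarge}(b), $P_3(\St[2])$ is finite hence LERF, and $P_3(\R P^2)$ is LERF because from Remarks~\ref{rems:sbg} it decomposes (via the splitting Fadell-Neuwirth sequence $1\to P_1(\R P^2\setminus\{x_1\})\to P_3(\R P^2)\to P_2(\R P^2)\to 1$, or directly) as a virtually abelian group times a free group, so Theorem~\ref{thm:Mi1} applies — here I would spell out which decomposition is used, or alternatively invoke that $\R P^2$ minus a point is a M\"obius strip, so $P_1(\R P^2\setminus\{x_1\})$ is $F_2$ and $P_2(\R P^2)$ is finite. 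For the remaining non-large surfaces (annulus, torus, M\"obius strip, Klein bottle) one shows $P_3(S)$ is not LERF: from Remarks~\ref{rems:sbg}, $P_3(\text{annulus})\cong P_2(Pants)\times\BZ$, and $P_2(Pants)$ is not LERF (it contains $F_2\times F_2$ is ruled out, but the argument of Theorem~\ref{thm:large} shows $P_2(Pants)$ is not LERF since $P_4(D^2)=P_2(Pants)\times\BZ$ is not LERF by Corollary~\ref{cor:puredisk}), so by Lemma~\ref{lem:lerf1} neither is $P_3(\text{annulus})$; for the torus and Klein bottle one uses Lemma~\ref{lem:pant}(a) (both are large-surface-like containing $Pants$, or: the torus minus a point and Klein bottle minus a point contain $Pants$, and $P_3(S)$ for $S$ torus or Klein bottle surjects onto $P_2(S)$ with kernel $P_1(S\setminus\{x_1\})$ which contains $Pants$'s braid group) — more cleanly, $P_2(Pants)\hookrightarrow P_3(S)$ for $S$ torus or Klein bottle via the Fadell-Neuwirth fiber inclusion, giving non-LERF by Theorem~\ref{thm:lerf1}; the M\"obius strip reduces to the annulus.

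For item~(c): $P_4(\St[2])\cong F_2\times\BZ_2$ is (LR) by Theorem~\ref{thm:LRnlarge}(c), hence LERF. For every other non-large surface $S$, $P_4(S)$ is not LERF: by the Fadell-Neuwirth sequence, $P_4(S)$ surjects onto $P_3(S)$ (for the disk, annulus, torus, M\"obius, Klein bottle, and — since $4\geq 3$ — also works, with $P_3(\R P^2)$ the base needing $n\geq 2$) with a fiber containing a free group of rank $\geq 2$; more directly, $P_3(D^2)\cong F_2\times\BZ$ embeds in $P_4(D^2)$ and by Theorem~\ref{thm:f2f2} (or by Makanina's construction once $n\geq 5$) — but for $n=4$ specifically, $P_4(D^2)$ is not LERF by Corollary~\ref{cor:puredisk}, and it embeds in $P_4(S)$ for $S$ the annulus, torus, M\"obius strip, Klein bottle via \cite[Proposition~2.2]{PR}; for $\R P^2$, use the decomposition from Remarks~\ref{rems:sbg} together with the fact that $P_4(\R P^2)$ surjects onto $P_3(\R P^2)$... actually $P_4(\R P^2)$ contains $P_2(Pants)$ by Lemma~\ref{lem:pant}-type reasoning (the projective plane minus two points is in $\mathcal{F}_2$ and contains $Pants$), so it is not LERF. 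For item~(d): for every non-large surface $S$ and every $n\geq 5$, $P_n(S)$ contains $F_2\times F_2$ — this is exactly Theorem~\ref{thm:f2f2}(c) for $S$ not the sphere (disk, annulus, torus, M\"obius, Klein bottle all have $n\geq 5$), and Theorem~\ref{thm:f2f2}(d) gives the sphere for $n\geq 6$, while $P_5(\St[2])\cong P_2(Pants)\times\BZ_2$ from Remarks~\ref{rems:sbg} is not LERF because $P_2(Pants)$ is not LERF (shown above) — wait, need $P_2(Pants)\times\BZ_2$ not LERF, which follows since a LERF group times finite is... hmm, actually Lemma~\ref{lem:lerf1} is for $\times\BZ$; instead use that $P_2(Pants)$ embeds in $P_5(\St[2])$ so Theorem~\ref{thm:lerf1} gives non-LERF. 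Then induction on $n$: $P_n(S)$ surjects onto $P_5(S)$ (resp.\ contains $P_5(D^2)$), so not LERF for all $n\geq 5$.

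The main obstacle I anticipate is the bookkeeping at the boundary cases — specifically handling the sphere and projective plane with few strings where the Fadell-Neuwirth sequence is unavailable, and making sure the "$P_2(Pants)$ is not LERF" fact (which is genuinely needed in several places for items (c) and (d)) is cleanly established; the cleanest route is the observation that $P_4(D^2)\cong P_2(Pants)\times\BZ$ together with Corollary~\ref{cor:puredisk} and Lemma~\ref{lem:lerf1}. The M\"obius strip requires no separate work thanks to Remarks~\ref{rem:pi1}(2). Everything else is a routine assembly of the cited results, organized as: positive cases $\Rightarrow$ Theorem~\ref{thm:LRnlarge}/finiteness/Theorem~\ref{thm:Mi1}; negative cases $\Rightarrow$ exhibit either $F_2\times F_2$, the group $K$, or a known non-LERF pure braid group as a subgroup, then invoke Theorem~\ref{thm:lerf1}; with an induction step reducing large $n$ to $n=5$ (or $n=4$, $n=3$) via the Fadell-Neuwirth surjection and \cite[Proposition~2.2]{PR}.
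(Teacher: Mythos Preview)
Your overall strategy---positive cases via (LR), negative cases by exhibiting a non-LERF subgroup, organised through the Fadell--Neuwirth sequence---matches the paper's. But there is a genuine gap: your treatment of the M\"obius strip rests on the claim that ``by Remarks~\ref{rem:pi1}(2) its braid groups coincide with those of the annulus''. That remark says only that removing the \emph{boundary} of a surface $S$ does not change $P_n(S)$; it does not identify the M\"obius strip with the annulus. These surfaces are not homeomorphic (one is non-orientable), and their braid groups differ: $P_2(\text{annulus})\cong F_2\times\BZ$ by Remarks~\ref{rems:sbg}(d), whereas $P_2(Mb)$ only has $F_2\times\BZ$ as a proper finite-index subgroup (this is exactly what the paper invokes, via \cite[Proposition~A.1]{GP}, to settle $P_2(Mb)$). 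Your ``reduces to the annulus'' shortcut is used again in item~(b) for $P_3(Mb)$, so the error propagates. The paper handles the M\"obius strip in item~(a) directly through \cite{GP}, and for items~(b)--(d) it uses the uniform observation that for the annulus, torus, M\"obius strip and Klein bottle the punctured surface $S\setminus\{x_1,\ldots,x_k\}$ (with $k\geq 1$) is \emph{large}, so the Fadell--Neuwirth kernel is non-LERF by Theorem~\ref{thm:large}; this covers $Mb$ without any reduction to the annulus.

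A smaller slip: in your sequence $1\to P_1(\R P^2\setminus\{x_1\})\to P_3(\R P^2)\to P_2(\R P^2)\to 1$ the kernel is wrong---forgetting one string over the base $P_2(\R P^2)$ gives fibre $P_1(\R P^2\setminus\{x_1,x_2\})\cong F_2$, not $P_1(\R P^2\setminus\{x_1\})\cong\BZ$ (and $\R P^2$ minus one point is a M\"obius band, whose $\pi_1$ is $\BZ$, not $F_2$). With the correct fibre your idea does work: $F_2$ is a finite-index LERF subgroup, so $P_3(\R P^2)$ is LERF by Theorem~\ref{thm:lerf1}. The paper phrases this step slightly differently but reaches the same conclusion.
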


\begin{proof}

Let $S$ be a non-large surface. Recall that the case of the disk was covered in \cite{DM}.

\begin{enumerate}
\item
If $S$ is the sphere, projective plane, annulus or torus, then the result follows from Theorem~\ref{thm:LRnlarge}. We know from \reprop{kb} that the group $P_2(Kb)$ is not LERF.
It remains to consider the case of the Möbius band.

Let $S=Mb$ the Möbius band. From \cite[Proof of Proposition A.1, item (a)]{GP} the group $P_2(Mb)$ has a subgroup $G$ of finite index such that $G=F_2\times \mathbb{Z}$, where $F_2$ is the free group of rank 2. The result then follows by applying Theorem \ref{thm:lerf1} and Lemma~\ref{lem:lerf1}.

\item
If $S$ is the sphere then the result follows from Theorem \ref{thm:LRnlarge}.

For the projective plane, using the Fadell-Neuwirth short exact sequence \eqref{eq:ses}, we have that the pure braid group $P_3(\mathbb{R}P^2)$ has $P_2(Mb)$ as a finite index subgroup since $P_2(\mathbb{R}P^2)=Q_8$, where $Q_8$ is the quaternion group.
	Since $P_2(Mb)$ is LERF  (from the first item of this theorem) and has finite index in $P_3(\mathbb{R}P^2)$ we conclude from Theorem~\ref{thm:lerf1} that $P_3(\mathbb{R}P^2)$ is LERF.

Now, let $S$ be the annulus, the torus, the Möbius band or the Klein bottle.
From the short exact sequence \eqref{eq:ses} with $n=1$ and $m=2$ we obtain that $P_3(S)$ has a subgroup $P_2(S\setminus \{x_1\})$ that is not LERF from Theorem~\ref{thm:large}. Therefore, from Theorem~\ref{thm:lerf1} we conclude this item.

\item
In this case, if $S$ is the sphere, the result is also guaranteed by Theorem \ref{thm:LRnlarge}.

Now suppose that $S$ is different from the sphere. From the short exact sequence \eqref{eq:ses} with $n=2$ and $m=2$ for the case of the projective plane, and with $n=1$ and $m=3$ otherwise, we obtain that $P_4(S)$ has a subgroup that is not LERF from Theorem~\ref{thm:large} or item (b) of this theorem. So, applying Theorem~\ref{thm:lerf1} we get the conclusion of this item.

	\item To show $P_l(S)$ is not LERF for every $l\geq 5$ it is enough to consider the short exact sequence \eqref{eq:ses} with $n=3$ and $m=l-3$ and apply Theorem~\ref{thm:lerf1}. Then the respective subgroup is a pure braid group of a large surface $P_{l-3}(S\setminus \{x_1, x_2, x_3\})$ that is not LERF by Theorem~\ref{thm:large}, since $l-3\geq 2$.

\end{enumerate}

\end{proof}

Since (LR) implies LERF then we have the result below.

\begin{cor}\label{cor:LRall}
 Let $S$ be a compact surface. If $S$ is large then $P_n(S)$ is not (LR) for all $n\geq 2$. If $S$ is not large then
 \begin{enumerate}
 \item $P_2(S)$ is (LR) if $S$ is the disk, sphere, projective plane, annulus or torus. $P_2(S)$ is not (LR) if $S$ is the Klein bottle;
	
\item $P_3(S)$ is (LR) if $S$ is the sphere or the disk and not (LR) if $S$ is the annulus, torus, Möbius band or Klein bottle.

	\item $P_4(S)$ is (LR) if and only if $S$ is the sphere.
	\item $P_n(S)$ is not (LR) for every $n\geq 5$.
\end{enumerate}
\end{cor}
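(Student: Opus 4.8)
The plan is to deduce Corollary~\ref{cor:LRall} from the LERF classification in Theorem~\ref{thm:nlarge}, the (LR) positive results in Theorem~\ref{thm:LRnlarge}, and the large-surface result Theorem~\ref{thm:large}, using only the one-line implication that (LR) implies LERF (via the fact, recalled before Theorem~\ref{thm:Mi1}, that a virtual retract of a residually finite group is closed in the profinite topology, together with residual finiteness of surface braid groups). The key observation is that for \emph{non}-LERF groups we get ``not (LR)'' for free, and for the groups that are claimed to be (LR) we already did the work in Theorem~\ref{thm:LRnlarge}. So the corollary is essentially a bookkeeping exercise matching the two lists.

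First I would treat the large case: if $S$ is large and $n\geq 2$, then $P_n(S)$ is not LERF by Theorem~\ref{thm:large}, hence not (LR), since (LR) $\Rightarrow$ LERF. Next, the non-large cases split according to $n$. For $n=2$: if $S$ is the disk, sphere, projective plane, annulus or torus, then $P_2(S)$ is (LR) by Theorem~\ref{thm:LRnlarge}(a); if $S$ is the Klein bottle, then $P_2(Kb)$ is not LERF by Proposition~\ref{prop:kb}, hence not (LR). (There is no assertion for the M\"obius band at $n=2$ in the corollary, matching the fact that Theorem~\ref{thm:LRnlarge} does not address it.) For $n=3$: if $S$ is the disk or the sphere, $P_3(S)$ is (LR) by Theorem~\ref{thm:LRnlarge}(b); if $S$ is the annulus, torus, M\"obius band or Klein bottle, then by Theorem~\ref{thm:nlarge}(b) $P_3(S)$ is not LERF (only the disk, sphere and projective plane give LERF), hence not (LR). For $n=4$: Theorem~\ref{thm:nlarge}(c) says $P_4(S)$ is LERF iff $S$ is the sphere, so for $S$ the sphere it is (LR) by Theorem~\ref{thm:LRnlarge}(c), and for every other non-large $S$ it fails to be LERF, hence fails (LR); this gives the ``if and only if''. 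Finally, for $n\geq 5$: Theorem~\ref{thm:nlarge}(d) says $P_n(S)$ is never LERF, so it is never (LR).

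The only subtlety worth flagging, and the ``main obstacle'' in the sense of the one place something real is being invoked rather than just quoted, is the direction (LR) $\Rightarrow$ LERF: this requires $P_n(S)$ to be residually finite so that the closed-in-profinite-topology characterization of LERF applies to its virtual retracts. Surface braid groups are residually finite (they are, for instance, finitely generated subgroups of fundamental groups of fibre-type arrangements, or one can cite that mapping class group arguments / linearity yield residual finiteness), so this is not a genuine gap, but it is the hypothesis that makes the implication work and should be noted. Everything else is a direct cross-reference: the positive (LR) assertions are exactly the content of Theorem~\ref{thm:LRnlarge}, and each ``not (LR)'' assertion is obtained by combining the corresponding ``not LERF'' statement (from Theorem~\ref{thm:large}, Proposition~\ref{prop:kb}, or Theorem~\ref{thm:nlarge}) with the implication above.

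Concretely, the proof I would write is short: ``Since (LR) implies LERF for residually finite groups, the negative statements follow immediately from Theorems~\ref{thm:large} and~\ref{thm:nlarge} and Proposition~\ref{prop:kb}, and the positive statements are Theorem~\ref{thm:LRnlarge}; the `only if' in item~(c) combines both directions.'' I would then spell out the four cases $n=2,3,4,\geq 5$ exactly as above so the reader can check the lists line up, and stop there.
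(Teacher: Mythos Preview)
Your proposal is correct and matches the paper's own proof, which is the one-liner ``Follows from Theorem~\ref{thm:LRnlarge}, Theorem~\ref{thm:large} and Theorem~\ref{thm:nlarge}''; you have simply unpacked the case analysis explicitly. The residual finiteness caveat you raise is reasonable but the paper handles it more casually, asserting ``(LR) implies LERF'' in the sentence introducing the corollary without further justification (and your separate citation of Proposition~\ref{prop:kb} for the Klein bottle is redundant, since that case is already folded into Theorem~\ref{thm:nlarge}(a)).
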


\begin{proof}
Follows from Theorem \ref{thm:LRnlarge}, Theorem \ref{thm:large} and Theorem \ref{thm:nlarge}.
\end{proof}

For the pure braids, there are two cases for which we don't know the validity of property (LR): $P_2(S)$ when $S$ is the Möbius band and $P_3(S)$ when $S$ is the projective plane.





For the full braid groups, we have the result below.

\begin{cor}
Let $S$ be a compact surface. If $S$ is large then $B_n(S)$ is not (LR) for all $n\geq 2$. If $S$ is not large then
 \begin{enumerate}
 \item If $S$ is the Klein bottle then $B_2(S)$ is not (LR);
	
\item If $S$ is the annulus, Möbius band, Klein bottle or the torus then $B_3(S)$ is not (LR);

	\item If $S$ is not the sphere then $B_4(S)$ is not (LR);
	\item $B_n(S)$ is not (LR) for every $n\geq 5$.
\end{enumerate}
\end{cor}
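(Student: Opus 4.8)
The plan is to deduce this corollary from the already-established results on LERF for surface braid groups, using the standard facts that property (LR) implies LERF and that LERF passes to (arbitrary) subgroups. The key reduction is that whenever a group $G$ contains a non-LERF subgroup, $G$ is not LERF (Theorem~\ref{thm:lerf1}), hence not (LR). So for each item it suffices to exhibit a non-LERF subgroup of the relevant full braid group $B_n(S)$, and in almost every case that subgroup already appears inside the pure subgroup $P_n(S)$, which has finite index in $B_n(S)$.

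First I would treat the large case: if $S$ is large, then by Theorem~\ref{thm:large} the group $P_n(S)$ is not LERF for $n\geq 2$; since $P_n(S)$ is a (finite index, in particular a) subgroup of $B_n(S)$, Theorem~\ref{thm:lerf1} gives that $B_n(S)$ is not LERF, hence not (LR). Next, for the non-large surfaces I would invoke Theorem~\ref{thm:nlarge} item by item. For (a): if $S$ is the Klein bottle, $P_2(S)$ is not LERF by Proposition~\ref{prop:kb}, so $B_2(S)$ is not LERF, hence not (LR). For (b): if $S$ is the annulus, torus, Möbius band or Klein bottle, $P_3(S)$ is not LERF by Theorem~\ref{thm:nlarge}(b), so $B_3(S)$ is not (LR). For (c): if $S$ is non-large but not the sphere, $P_4(S)$ is not LERF by Theorem~\ref{thm:nlarge}(c), so $B_4(S)$ is not (LR); combined with the large case this yields the statement for all $S$ that are not the sphere. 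For (d): for $n\geq 5$, $P_n(S)$ is not LERF for every non-large $S$ by Theorem~\ref{thm:nlarge}(d) (and for large $S$ by Theorem~\ref{thm:large}), so $B_n(S)$ is not (LR).

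I should double-check the one subtlety here: Corollary~\ref{cor:braidpure} already records the equivalence ``$B_n(S)$ is LERF iff $P_n(S)$ is'', which immediately gives ``$B_n(S)$ is not LERF whenever $P_n(S)$ is not LERF''; so in fact I can cite Corollary~\ref{cor:braidpure} directly instead of reproving the subgroup argument, and then pass from ``not LERF'' to ``not (LR)'' using the implication (LR)$\Rightarrow$LERF noted just before Theorem~\ref{thm:Mi1}. The only place requiring slightly more care is making sure the case division in the corollary is exhaustive and matches the hypotheses available: item (a) needs the Klein bottle case from Proposition~\ref{prop:kb}; items (b), (c), (d) reduce to Theorem~\ref{thm:nlarge}(b), (c), (d) respectively, all of which are unconditional for the surfaces listed.

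I do not expect any genuine obstacle: the corollary is a formal consequence of the two implications ``(LR) $\Rightarrow$ LERF'' and ``LERF descends to subgroups'' applied to the LERF classification already proved. The only thing to be careful about is bookkeeping — ensuring each surface/string-number pair in the statement is covered by exactly one of Theorem~\ref{thm:large}, Proposition~\ref{prop:kb}, or Theorem~\ref{thm:nlarge}, and that no claim of positive (LR) is being asserted here (the corollary only asserts failure of (LR), which is the ``easy direction'' coming from failure of LERF). A clean write-up would therefore simply say: ``Follows from Theorem~\ref{thm:large}, Proposition~\ref{prop:kb} and Theorem~\ref{thm:nlarge}, together with the facts that (LR) implies LERF and that LERF passes to subgroups (Theorem~\ref{thm:lerf1}).''
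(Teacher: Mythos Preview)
Your proposal is correct and matches the paper's own proof, which reads simply ``It follows from Theorem~\ref{thm:large}, Theorem~\ref{thm:nlarge} and Corollary~\ref{cor:braidpure}.'' Your extra citation of Proposition~\ref{prop:kb} is harmless but redundant, since that case is already absorbed into Theorem~\ref{thm:nlarge}(a).
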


\begin{proof}
It follows from  Theorem \ref{thm:large}, Theorem \ref{thm:nlarge} and Corollary \ref{cor:braidpure}.
\end{proof}

It is worth mentioning that   $B_2(S)$ if $S$ is the sphere or projective plane and $B_3(S)$ if $S$ is the sphere are finite groups hence  (LR). If $S$ is the disk then $B_2(S)$ is infinite cyclic hence also (LR). We however don't know the validity of (LR) for: $B_2(S)$ if $S$ is annulus, Möbius band or torus; $B_3(S)$ if $S$ is the projective plane, sphere or disk; $B_4(S)$ if $S$ is the sphere.

\subsection{The generalized word problem}\label{gwp}


The occurrence problem for a finitely presented group $G$ is the problem of deciding, given $w, u_1,\ldots, u_n\in G$ (written as words in generators of $G$), whether $w\in \langle u_1,\ldots, u_n \rangle$, the subgroup of $G$ generated by $u_1,\ldots, u_n$.
Since the occurrence problem has the word problem as a special case (to decide whether $w=1$, ask whether $w\in \langle 1\rangle$), it is also known as the generalized word problem.
The latter term is due to Magnus, who solved the problem for one-relator groups in \cite{Mag}.


Mikhailova \cite{Mik2} showed that the occurrence problem is unsolvable for $F_2\times F_2$. Using this result Makanina \cite{Mak} showed that the occurrence problem is also unsolvable for braid groups $B_n$ with $n\geq 5$ strings and Stillwell \cite{St} showed that the occurrence problem for the mapping class group $M(g,0)$ of the closed orientable surface of genus $g\geq 1$ is solvable if and only if $g=1$.
From Theorem~\ref{thm:f2f2} we may conclude the following result for surface braid groups.

\begin{rem}\label{rem:mak}

Makanina \cite{Mak} did not discuss solvability of the occurrence problem for the pure braid groups of the disk.  However, their construction of a group $F_2\times F_2$ inside $B_n(D^2)$ is such that the free generators of the rank 2 free groups are pure braids. So, from the construction given in \cite{Mak} we also conclude the occurrence problem is unsolvable for pure braid groups $P_n(D^2)$, with $n\geq 5$.
\end{rem}

\begin{prop}

The occurrence problem for the braid group $B_n(S)$ of the surface $S$, and its pure braid subgroup $P_n(S)$, is unsolvable if

\begin{enumerate}
\item $S$ is a surface in  $\mathcal{F}_3$ and $n\geq 2$,

\item the surface $S$ belongs to $\mathcal{F}_2$ and $n\geq 3$,

\item $S$ belongs to $\mathcal{F}_1$, but different from the sphere, and $n\geq 5$,

\item $S$ is the sphere and $n\geq 6$.


\end{enumerate}







\end{prop}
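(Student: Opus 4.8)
The plan is to reduce the statement to the theorem of Mikhailova \cite{Mik2} that the occurrence problem for $F_2\times F_2$ is unsolvable, combined with \rethm{f2f2}. The key principle is general: if $G$ is a finitely presented group and $H\leq G$ is a finitely generated subgroup whose generators are given explicitly as words in the generators of $G$, and the occurrence problem for $H$ is unsolvable, then so is the occurrence problem for $G$. Indeed, if the occurrence problem for $G$ were solvable, so would be that for $H$: given an instance $w,u_1,\ldots,u_m$ for $H$, rewrite $w,u_1,\ldots,u_m$ as words in the generators of $G$ using the fixed expressions of the generators of $H$; since $\langle u_1,\ldots,u_m\rangle$ denotes the same subgroup whether it is formed inside $H$ or inside $G$ (in either case it is the subgroup generated by these elements, which all lie in $H$), an algorithm for the occurrence problem of $G$ answers this instance, hence the original instance for $H$, correctly --- a contradiction.

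Next I would apply this with $G=P_n(S)$ and $H$ a copy of $F_2\times F_2$. Under each of the hypotheses (a)--(d), \rethm{f2f2} provides such a copy, and inspection of its proof shows the embedding is explicit: the copy of $F_2\times F_2$ is built from Makanina's four braids $a,b,c,d$ of \rerem{maka} --- which are concrete words in the Artin generators $\sigma_1,\ldots,\sigma_4$, and are pure braids by \rerem{mak} --- by pushing them through Fadell--Neuwirth short exact sequences and the configuration-space inclusions of \relem{pant}, each induced by an explicit geometric map. As surface braid groups of compact surfaces are finitely presented, one thereby obtains a concrete four-element generating set for a copy of $F_2\times F_2$ inside a finite presentation of $P_n(S)$, so the principle above applies and the occurrence problem for $P_n(S)$ is unsolvable. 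Since $P_n(S)$ sits as an explicitly described finite-index subgroup of $B_n(S)$ (which is also finitely presented), the same principle yields unsolvability of the occurrence problem for $B_n(S)$.

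The only delicate point is precisely this matter of explicitness: one must check that the embedding $F_2\times F_2\hookrightarrow P_n(S)$ extracted from \rethm{f2f2} is algorithmically effective, i.e.\ that its four generators can be written down as computable words in a chosen finite presentation of $P_n(S)$. This is clear for the disk (Makanina's generators are literal words in the $\sigma_i$), and it propagates to the other surfaces because at each stage of the argument of \rethm{f2f2} the relevant homomorphism ($i_\ast$ in a Fadell--Neuwirth sequence, or the inclusion of \relem{pant}) is induced by an explicit map of configuration spaces and hence sends the chosen generators to computable words, once a finite presentation of the intermediate surface braid group has been fixed. Beyond this bookkeeping, the proposition is an immediate consequence of \rethm{f2f2}, \rerem{mak} and \cite{Mik2}.
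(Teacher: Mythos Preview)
Your proposal is correct and follows essentially the same route as the paper: exhibit an explicit copy of $F_2\times F_2$ inside $P_n(S)$ via \rethm{f2f2} and invoke Mikhailova's theorem \cite{Mik2}. You are in fact more careful than the paper's own proof, which simply states that the result follows from \rethm{f2f2} and \cite{Mik2} without spelling out the effectiveness point you raise about writing the four generators as computable words in a fixed presentation; your discussion of this bookkeeping is a welcome addition, not a divergence in strategy.
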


\begin{proof}

The general idea is to exhibit a subgroup $H$ of $P_n(S)$ such that the  $H$ occurrence problem is unsolvable for $H$.
The result follows from Theorem~\ref{thm:f2f2} and the fact that the occurrence problem is unsolvable for $F_2\times F_2$, see \cite{Mik2}.
\end{proof}

 It is known that subgroup separability implies solubility of the generalized word problem  (see \cite[Section 3]{FW} ). Combining that fact with Theorem \ref{thm:nlarge} gives us the following result.

\begin{cor}\label{cor:gwp}
Let $S$ be a non-large surface.  Then the occurence problem for $P_n(S)$ is solvable:
\begin{enumerate}

\item
If $n=2$ and $S$ is not the Klein bottle;

\item
If $n=3$ and $S$ is the disk, sphere or projective plane;

\item
If $n=4$ and $S$ is the sphere.
\end{enumerate}
\end{cor}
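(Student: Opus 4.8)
The plan is to recognize that this statement is a direct corollary of two ingredients already in hand: the implication ``subgroup separability $\Rightarrow$ solvability of the generalized word problem'', recorded in \cite[Section~3]{FW} (the argument being an adaptation of Mal'cev's proof that residual finiteness yields a solvable word problem), together with the complete classification of the LERF pure braid groups of non-large surfaces given in Theorem~\ref{thm:nlarge}.

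First I would observe that the three cases listed in the statement --- $n=2$ with $S$ not the Klein bottle; $n=3$ with $S$ the disk, the sphere, or the projective plane; and $n=4$ with $S$ the sphere --- are \emph{precisely} the cases in which Theorem~\ref{thm:nlarge} asserts that $P_n(S)$ is LERF (and the last item of that theorem tells us $P_n(S)$ is never LERF for $n\geq 5$, so no further cases arise). Hence for each such pair $(n,S)$, invoking the implication LERF $\Rightarrow$ solvable occurrence problem yields the conclusion at once.

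The only point deserving a line of explanation is why that implication holds effectively. Given a finitely generated subgroup $H=\ang{u_1,\ldots,u_k}$ of the finitely presented group $G=P_n(S)$ and an element $w$, one runs two searches in parallel: (i) enumerate products of the $u_i^{\pm 1}$ and test, using solvability of the word problem in $G$ --- itself a consequence of the residual finiteness implied by LERF --- whether any such product equals $w$; this halts exactly when $w\in H$. (ii) Enumerate finite-index subgroups of $G$ (equivalently, finite quotients, which are effectively listable in a residually finite finitely presented group) containing all $u_i$ but not $w$; such a subgroup exists exactly when $w\notin H$, by separability of $H$. Exactly one of the two searches terminates, deciding membership. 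I do not anticipate any genuine obstacle here: the entire mathematical content sits in Theorem~\ref{thm:nlarge}, and the corollary is a formal consequence of it together with the cited general fact about LERF groups.
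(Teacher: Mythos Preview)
Your argument is correct and is exactly the approach the paper takes: the corollary is obtained by combining Theorem~\ref{thm:nlarge} with the fact, cited from \cite[Section~3]{FW}, that LERF implies solvability of the generalized word problem. Your added sketch of the parallel-search algorithm is a helpful elaboration but not required for the proof.
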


We recall that for $n=1$ the (pure) braid group is equal to the fundamental group of $S$, see Remarks~\ref{rem:pi1}.

There are some missing cases, even for the pure braid groups. We don't know the answer for:
\begin{itemize}
\item
$P_2(S)$ if $S\in \mathcal{F}_2$ or if $S$ is the Klein bottle;

\item
$P_n(S)$ if $S$ is the annulus, torus, Möbius Strip or the Klein bottle and $n\in  \{3, 4\}$;

\item
$P_4(S)$ if $S$ is the disk or projective plane.
\end{itemize}

\section{Virtual braid groups}\label{sec:vbg}

The virtual braid group is the natural companion to the category of virtual knots, just as the Artin braid group is to usual knots and links.
We note that a virtual knot diagram is like a classical knot diagram with one extra type of crossing, called a virtual crossing. The virtual braid groups have interpretations in terms of diagrams, see~\cite{Kam2},~\cite{Kau} and~\cite{V}.
The notion of virtual knots and links was introduced by Kauffman together with virtual braids in \cite{Kau}, and since then it has drawn the attention of several researchers.

Let $n\geq 2$ be a positive integer.
 For the first definition, we shall consider the classical presentation of the Artin braid group $B_n$ (see \cite{A2}) and the presentation of the virtual braid group $VB_n$ that was formulated  in \cite[p.798]{V} and restated in \cite{BB}.

\begin{defn}\label{defn:vbn}
The \emph{braid group on $n$ strands}, denoted by $B_n$, is the abstract group generated by $\sigma_i$, for $i=1,2,\dots,n-1$, with the following relations:
		\begin{itemize}
       \item[(AR1)] $\sigma_i\sigma_{i+1}\sigma_{i}=\sigma_{i+1}\sigma_{i}\sigma_{i+1}$, $i=1,2,\dots,n-2;$
       \item[(AR2)] $\sigma_{i}\sigma_j=\sigma_{j}\sigma_i$, $\mid i-j\mid\ge 2$.
\end{itemize}

The \emph{virtual braid group on $n$ strands}, denoted by $VB_n$, is the abstract group generated by $\sigma_i$ (classical generators) and $v_i$ (virtual generators), for $i=1,2,\dots,n-1$, with relations (AR1), (AR2) and:
		\begin{itemize}
       \item[(PR1)] $v_iv_{i+1}v_{i}=v_{i+1}v_{i}v_{i+1}$, $i=1,2,\dots,n-2;$
       \item[(PR2)]  $v_{i}v_j=v_{j}v_i$, $\mid i-j\mid\ge 2;$
			 \item[(PR3)] $v_i^2=1$, $i=1,2,\dots,n-1;$
			 \item[(MR1)]  $\sigma_{i}v_j=v_{j}\sigma_i$, $ \mid i-j\mid\ge 2;$
			 \item[(MR2)] $v_iv_{i+1}\sigma_{i}=\sigma_{i+1}v_{i}v_{i+1}$, $i=1,2,\dots,n-2.$
\end{itemize}
\end{defn}

The generator $\sigma_i$ corresponds to the diagram represented on the left of Figure~\ref{fig:classical}, the generator $\sigma_i^{-1}$ is obtained from $\sigma_i$ by making a crossing change. The geometric generator $v_i$ is illustrated in Figure~\ref{fig:singvirt}.

\begin{figure}[!htb]
\begin{minipage}[b]{0.45\linewidth}
\centering
\includegraphics[width=0.4\columnwidth]{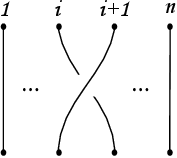}
\caption*{$\sigma_i$}
\end{minipage} \hfill
\begin{minipage}[b]{0.45\linewidth}
\centering
\includegraphics[width=0.4\columnwidth]{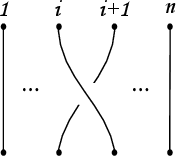}
\caption*{$\sigma_i^{-1}$}
\end{minipage}
\caption{Classical crossings, for $i=1,\ldots, n-1$.}
\label{fig:classical}
\end{figure}

Recently, in \cite{CPM}, Caprau, De la Pena and McGahan introduced virtual singular braids as a generalization of classical singular braids defined by Birman \cite{Bi} and Baez \cite{Ba} for the study of Vassiliev invariants, and virtual braids defined by Kauffman \cite{Kau} and Vershinin \cite{V}. In \cite{CPM} the authors proved an Alexander and Markov Theorem for virtual singular braids and gave two presentations for the monoid of virtual singular braids, denoted by $VSB_n$.
In a more recent paper \cite{CY} Caprau and Yeung showed that the monoid $VSB_n$ embeds in the group $VSG_n$ called the virtual singular braid group on $n$ strands. They also gave a presentation of the virtual singular pure braid group $VSPG_n$ and showed that $VSG_n$ is a semi-direct product of $VSPG_n$ and the symmetric group $S_n$.

Virtual singular braids are similar to classical braids, in addition to having classical crossings, they also have virtual and singular crossings.
The multiplication of two virtual singular braids $\alpha$ and $\beta$ on $n$ strands is given using vertical concatenation. The braid $\alpha\beta$ is formed by placing $\alpha$ on top of $\beta$ and gluing the bottom endpoints of $\alpha$ with the top
endpoints of $\beta$. Under this binary operation, the set of isotopy classes of virtual singular braids on $n$ strands forms a monoid.
Following \cite[Definition~3]{CY} we will call an element in $VSG_n$ an \emph{extended virtual singular braid} or simply a \emph{virtual singular braid} for short.

\begin{figure}[!htb]
\begin{minipage}[b]{0.30\linewidth}
\centering
\includegraphics[width=0.6\columnwidth]{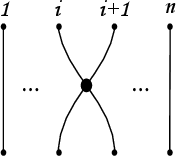}
\caption*{$\tau_i$}
\end{minipage} \hfill
\begin{minipage}[b]{0.30\linewidth}
\centering
\includegraphics[width=0.6\columnwidth]{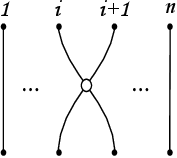}
\caption*{$\tau_i^{-1}$}
\end{minipage} \hfill
\begin{minipage}[b]{0.30\linewidth}
\centering
\includegraphics[width=0.6\columnwidth]{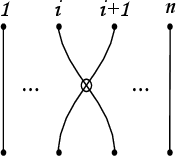}
\caption*{$v_i$}
\end{minipage}
\caption{Singular and virtual crossings, for $i=1,\ldots, n-1$.}
\label{fig:singvirt}
\end{figure}

In \cite[Definition~3]{CY} was defined an abstract group called the virtual singular braid group such that the virtual singular braid monoid $VSB_n$ (defined in \cite{CPM}) embeds in it. We shall use the presentation as stated in \cite[Pages~5-6]{CY}. We note that the relations of the groups $B_n$, $VB_n$ given in their respective presentations (see \redefn{vbn}) appear in the following definition.

\begin{defn}\label{defn:vsgn}
The \emph{virtual singular braid group}, denoted by $VSG_n$, is the abstract group generated by $\sigma_i$ (classical generators), $\tau_i$ (singular generators) and $v_i$ (virtual generators), where $i=1,2,\dots,n-1$, subject to the following relations:
\begin{itemize}
    \item[(2PR)] Two point relations: $v_i^2=1$ and $\sigma_{i}\tau_i=\tau_{i}\sigma_i$, for $i=1,2,\dots,n-1$ for all $i=1,2,\dots,n-1$.
    \item[(3PR)] Three point relations, for all $|i-j|=1$:
		\begin{itemize}
      \item[(3PR1)] $\sigma_i\sigma_{j}\sigma_{i}=\sigma_{j}\sigma_{i}\sigma_{j}$,
      \item[(3PR2)] $v_iv_{j}v_{i}=v_{j}v_{i}v_{j}$,
			\item[(3PR3)] $v_i\sigma_{j}v_{i}=v_{j}\sigma_{i}v_{j}$,
			\item[(3PR4)] $v_i\tau_{j}v_{i}=v_{j}\tau_{i}v_{j}$,
			\item[(3PR5)] $\sigma_i\sigma_{j}\tau_{i}=\tau_{j}\sigma_{i}\sigma_{j}$.
		\end{itemize}
		\item[(CR)] Commuting relations: $g_{i}h_j=h_{j}g_i$ for $\mid i-j\mid\ge 2$, where $g_i, h_i\in \{\sigma_i, \tau_i, v_i \}$.
\end{itemize}
\end{defn}

\begin{rem}\label{rem:embedding}
Let $n\geq 3$. 	From \cite[Proposition~3.1]{CG} and \cite[Theorem~4]{CY} we conclude that the groups $B_n$, $VB_n$ and $SG_n$ are contained in the virtual singular braid group $VSG_n$.
\end{rem}

Finally, we obtain the following result about subgroup separability for virtual braid groups.

\begin{thm}
Let $n\geq 2$.
\begin{enumerate}
\item The virtual braid group $VB_n$ and its pure subgroup $VP_n$ are LERF if and only if $n=2$.

\item The virtual singular braid group $SVG_n$ and its pure subgroup $SVP_n$ are LERF if and only if $n=2$.
\end{enumerate}
\end{thm}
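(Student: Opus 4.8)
The plan is to reduce everything, via Corollary~\ref{cor:braidpure}, to the pure groups $VP_n$ and $VSPG_n$, and then to treat the cases $n=2$ and $n\geq 3$ separately. For the positive direction ($n=2$), the key observation is that $VP_2$ should be a free group (or virtually free): indeed, $VB_2$ is generated by $\sigma_1$ and $v_1$ with the only relation being $v_1^2=1$, so $VB_2 \cong \mathbb{Z} * \mathbb{Z}_2$, which is a free product of LERF groups and hence LERF by Theorem~\ref{thm:lerf2}; by Corollary~\ref{cor:braidpure} (or directly, $VP_2$ is the index-$2$ kernel of the length-mod-$2$ map, hence free of finite rank by Nielsen--Schreier), $VP_2$ is LERF as well. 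For $VSG_2$, inspecting Definition~\ref{defn:vsgn} with $n=2$ shows the only relations among $\sigma_1,\tau_1,v_1$ are $v_1^2=1$ and $\sigma_1\tau_1=\tau_1\sigma_1$, so $VSG_2 \cong (\mathbb{Z}\times\mathbb{Z}) * \mathbb{Z}_2$, again a free product of LERF groups (using that $\mathbb{Z}\times\mathbb{Z}$ is LERF since abelian), hence LERF by Theorem~\ref{thm:lerf2}; then $VSPG_2$ is LERF by Corollary~\ref{cor:braidpure}.

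For the negative direction, I would exhibit $F_2\times F_2$ (or, failing that, the group $K$ of Lemma~\ref{lem:Knotlerf}) as a subgroup of $VP_n$ for $n\geq 3$, and then invoke Theorem~\ref{thm:lerf1}. The most economical route uses Remark~\ref{rem:embedding}: for $n\geq 3$ the classical Artin braid group $B_n$ embeds in $VB_n$ and in $VSG_n$. If $n\geq 5$ this finishes the job immediately, since by Remark~\ref{rem:maka} (Makanina's construction) $B_n$ already contains $F_2\times F_2$ for $n\geq 5$, and since the free generators involved are pure braids (Remark~\ref{rem:mak}), this copy of $F_2\times F_2$ lies inside $VP_n$, so $VP_n$ is not LERF; by Theorem~\ref{thm:lerf1} neither is $VB_n$, and the same applies to $VSG_n$ and $VSPG_n$ using the embedding $B_n\hookrightarrow VSG_n$. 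What remains are the two small cases $n=3$ and $n=4$.

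The main obstacle is therefore $n\in\{3,4\}$, where the Artin braid group $B_n$ is itself LERF (Theorem~\ref{thm:disk}) and so cannot supply the bad subgroup; I must use the virtual generators. For $n=3$, the natural candidate is to use the surjection of a parabolic/visual subgroup of $VB_3$ onto something like $\mathbb{Z}*\mathbb{Z}_2$ together with a split extension, or — more concretely — to locate inside $VP_3$ either a copy of $K$ or a copy of $F_2\times F_2$ by an explicit word computation using $\sigma_1,\sigma_2,v_1,v_2$ and the mixed relations (MR1), (MR2). One workable strategy: $VP_n$ fits in a Fadell--Neuwirth-type split exact sequence $1\to VP_{n,1}\to VP_n\to VP_{n-1}\to 1$ (the "forget the last strand" map is split by strand-doubling and is known to have free kernel is \emph{false} in the virtual world — the kernel is not free), but one can still use that $VP_3$ surjects onto $VP_2 * \text{(something)}$ or that a suitable retract of $VP_3$ contains $F_2\times\mathbb{Z}$ with a non-LERF overgroup. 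I expect the cleanest argument is: show $VP_3$ contains a subgroup isomorphic to $F_2\times F_2$ directly by writing down eight explicit pure virtual braids (four commuting with the other four), analogous to Makanina's $a,b,c,d$ but shortened using that $v_i^2=1$ lets one "jump" strands cheaply; verify the commutation and freeness via the homomorphism to a free product that separates the two factors. Once $n=3$ is done, $n=4$ follows because $B_4\hookrightarrow VB_4$ and more directly because $VP_3$ (and hence its copy of $F_2\times F_2$) sits inside $VP_4$ as a subgroup via strand inclusion. Finally, assembling: $VP_n$ not LERF for $n\geq 3$ $\Rightarrow$ $VB_n$ not LERF for $n\geq 3$ by Theorem~\ref{thm:lerf1}, and the same chain with $VSG_n\supseteq VB_n$ gives the virtual singular statement, completing both items.
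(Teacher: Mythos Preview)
Your treatment of the case $n=2$ is correct and matches the paper exactly: $VB_2\cong\mathbb{Z}\ast\mathbb{Z}_2$ and $VSG_2\cong(\mathbb{Z}\times\mathbb{Z})\ast\mathbb{Z}_2$, hence LERF by \rethm{lerf2}, and the pure groups follow by \recor{braidpure}. Your handling of the virtual singular case for $n\geq 3$ via the embedding $VB_n\hookrightarrow VSG_n$ is also the paper's route.

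There are, however, two genuine problems in the negative direction. First, you misread \rethm{disk}: it says $B_n(D^2)$ is LERF if and only if $n\leq 3$, so $B_4$ is \emph{not} LERF. Thus the case $n=4$ requires no separate work at all: the embedding $B_4\hookrightarrow VB_4$ already transfers non-LERFness, and your whole discussion of ``the two small cases $n\in\{3,4\}$'' collapses to the single case $n=3$. (Your fallback for $n=4$ via $VP_3\subset VP_4$ would also work, but is unnecessary.)

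Second, and more seriously, for $n=3$ you have only a program, not a proof: you speculate about locating $F_2\times F_2$ or $K$ inside $VP_3$ by explicit word computations, but give no candidate elements and no mechanism for verifying freeness and commutation. This is the one case where the virtual generators genuinely matter (since $B_3$ is LERF), and your outline does not close the gap. The paper's argument here is short but relies on an external structural input you are missing: by \cite[Lemma~2.4]{SW} there is a decomposition $VP_3\cong \overline{P_4}\ast\mathbb{Z}$, where $\overline{P_4}=P_4/Z(P_4)$; and Dasbach--Mangum observe (see the paragraph before \cite[Corollary~1.6]{DM}) that $\overline{P_4}$ is not LERF. Since $\overline{P_4}$ is a subgroup of the free product, \rethm{lerf1} finishes the job. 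Without either this decomposition or a concrete construction of a non-LERF subgroup, your $n=3$ case is incomplete.
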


\begin{proof}
Let $n\geq 2$.
\begin{enumerate}

\item
First we consider the case of few strings. We note that there are decompositions of the virtual pure braid groups with few strings as follows
\begin{itemize}
	\item $VB_2=\Z\ast  \Z_2$ (it is obvious from its presentation), and
	\item $VP_3=\overline{P_4}\ast \Z$ (see \cite[Lemma~2.4]{SW}), where $\overline{P_4}$ is isomorphic to the classical pure braid group $P_4$ module its center.
\end{itemize}
Hence $VP_2$ and $VB_2$ are LERF from \rethm{lerf2} and \rethm{lerf1}. On the other hand $VP_3$ is not LERF since $\overline{P_4}$ is not (see \cite[Paragraph before Corollary~1.6]{DM}).
Now we consider the case $n\geq 4$.
It is well known that, for $n\geq 2$, $B_n$ is a subgroup of $VB_n$ (see \cite{Kam} and also \cite[Proposition~3.1]{CG}).
Since $B_n$ is not LERF for $n\geq 4$ \cite[Corollary~1.6]{DM} then $VB_n$ and so $VP_n$ are not LERF for $n\geq 4$.

\item This item holds for $n=2$ from \rethm{lerf2} and the fact that $SVB_2=\Z^2\ast  \Z_2$ (this is clear from \redefn{vsgn}). For $n\geq 3$ it follows from \rerem{embedding} and the first item of this theorem.
\end{enumerate}
\end{proof}

\begin{rem}
Let $n\geq 2$.
Since the Artin braid group $B_n$ is a subgroup of the virtual braid group $VB_n$ and of the virtual singular braid group, see \cite{Kam}, \cite[Proposition~3.1]{CG} and \cite[Remark~4]{O}, then we also may conclude that the occurrence problem is unsolvable for $VB_n$ and $SVG_n$, when $n\geq 5$.
\end{rem}

\end{document}